\theoremstyle{plain}
\newtheorem*{mainthm}{Main Theorem}
\newtheorem*{thmA}{Theorem A}
\newtheorem*{thmB}{Theorem B}
\newtheorem*{thmC}{Theorem C}
\newtheorem*{thmD}{Theorem D}
\newtheorem*{thmF}{Theorem F}
\newtheorem{lemma}{Lemma}
\newtheorem*{propE}{Proposition E}
\newtheorem{prop}{Proposition}
\newtheorem*{cor}{Corollary}
\newtheorem*{claim}{Claim}
\newtheorem{remark}{Remark}
\title[]{Conjugating automorphisms of graph products: Kazhdan's property (T) and SQ-universality}
\author{Anthony Genevois and Olga Varghese}
\thanks{A. G. was supported by a public grant as part of the Fondation Math\'ematique Jacques Hadamard. \\O. V. was supported by the Deutsche 
Forschungsgemeinschaft (DFG, German Research Foundation) under Germany's 
Excellence Strategy -EXC 2044-, Mathematics M\"unster: Dynamics-Geometry-Structure}
\date{\today}
\address{Anthony Genevois\\
D\'epartement de Math\'ematiques B\^atiment 307\\
Facult\'e des Sciences d'Orsay Universit\'e Paris-Sud\\
F-91405 Orsay (France)}
\email{anthony.genevois@math.u-psud.fr}
\address{Olga Varghese\\
Department of Mathematics\\
M\"unster University\\ 
Einsteinstra\ss e 62\\
48149 M\"unster (Germany)}
\email{olga.varghese@uni-muenster.de}
\begin{document}
\pagenumbering{arabic}
\begin{abstract}
An automorphism of a graph product of groups is \emph{conjugating} if it sends each factor to a conjugate of a factor (possibly different). In this article, we determine precisely when the group of conjugating automorphisms of a graph product satisfies Kazhdan's property (T) and when it satisfies some vastness properties including SQ-universality. 
\end{abstract}
\maketitle

Given a simplicial graph $\Gamma$ and a collection of groups $\mathcal{G} = \{ G_u \mid u \in V(\Gamma) \}$ indexed by the vertex-set $V(\Gamma)$ of $\Gamma$, the \emph{graph product} $\Gamma \mathcal{G}$ is defined as the quotient
$$ \left( \underset{u \in V(\Gamma)}{\ast} G_u \right) / \langle \langle [g,h]=1, \ g \in G_u, h \in G_v, \{u,v\} \in E(\Gamma) \rangle \rangle$$
where $E(\Gamma)$ denotes the edge-set of $\Gamma$. Two cases to keep in mind is that the graph product $\Gamma \mathcal{G}$ reduces to the free product of $\mathcal{G}$ if $\Gamma$ does not contain any edge, and it reduces to the direct sum of $\mathcal{G}$ if any two vertices of $\Gamma$ are linked by an edge. Usually, one says that graph products interpolate between free products and direct sums. In particular, they provide a convenient framework in which studying right-angled Artin groups (when factors are infinite cyclic) and right-angled Coxeter groups (when factors are cyclic of order two) in a unified way. 

In this article, we are interested in \emph{conjugating automorphisms} of graph products, ie., automorphisms sending factors to conjugates of factors. Groups of conjugating automorphisms of (particular) graph products turn out to appear in different places in the literature. For free groups, these automorphism groups have a very nice topological interpretation, as they coincide with motion groups of circles in the Euclidean spaces \cite{Motion} (see also \cite{Loop} and references therein). Conjugating automorphisms also appear naturally when studying free products \cite{FRI,FRII}, and have been studied independently \cite{SymmetricFree}. In addition to their own interest, studying conjugating automorphisms have been a key step in the understanding of the global automorphism group in several places; see for instance \cite{Tits} for right-angled Coxeter groups and \cite{Laurence} for right-angled Artin groups. Also, the group of conjugating automorphisms turns out to coincides with (a finite-index subgroup of) the global automorphism group in some cases. For instance, as proved in \cite{Genevois}, the group of conjugating automorphisms of a graph product of finite groups has finite index in the entire automorphism group. 

Given a graph product $\Gamma \mathcal{G}$, we denote by $\mathrm{ConjAut}(\Gamma \mathcal{G})$ the group of its conjugating automorphisms, and by  $\mathrm{PConjAut}(\Gamma \mathcal{G})$ its subgroup whose elements send each factor to a conjugate of itself. Also, we emphasize that, in all the article, we use the following convention: factors of graph products (also referred to as \emph{vertex-groups}) are always supposed to be non-trivial. 

The main result of our article is the following:

\begin{mainthm}
Let $\Gamma$ be a simplicial graph and $\mathcal{G}$ a collection of groups indexed by $V(\Gamma)$. If $\Gamma$ is not complete, then $\mathrm{PConjAut}(\Gamma \mathcal{G})$ admits a quotient $Q$ which acts fixed-point freely on a simplicial tree $T$. Moreover, if $\Gamma$ is finite and if the vertex-groups are all finitely generated, then the action $Q \curvearrowright T$ admits a WPD isometry. 
\end{mainthm}

Recall from \cite{MinasyanOsin} that, if a group $G$ acts on a simplicial tree $T$, then $g \in G$ is \emph{WPD} (for \emph{Weakly Properly Discontinuous}) if it is loxodromic and if its axis contains two vertices $u$ and $v$ (not necessarily distinct) such that $\mathrm{stab}_G(u) \cap \mathrm{stab}_G(v)$ is finite.

We deduce many interesting consequences from our theorem. First, we deduce some information regarding Kazhdan's Property (T). Examples of groups with Kazhdan's property (T) are the general linear groups ${\rm GL}_n(\mathbb{Z})$ for $n\geq 3$ (\cite[4.2.5]{Bekka}) and the automorphism groups of free groups ${\rm Aut}(F_n)$ for $n\geq 5$ (\cite{Kielak}, \cite{Kaluba}). 
More generally, an interesting question is the following:
For which shape of the graph $\Gamma$ does the automorphism group ${\rm Aut}(\Gamma \mathcal{G})$ have Kazhdan's property (T)? There are some results in the literature regarding (outer) automorphism groups of graph products of finite abelian groups in \cite[Thm. 1.4]{Leder}, \cite[Cor. 3]{Sale}, \cite[Thm. C]{Varghese}, of right-angled Artin groups in \cite[Cor. 1.8]{Aramayona} and of graph products of arbitrary vertex-groups in \cite[Thm. F]{Genevois}. 

Our main result in this direction is:

\begin{thmA}
Let $\Gamma$ be a finite simplicial graph and $\mathcal{G}$ a collection of groups indexed by $V(\Gamma)$. Then $\mathrm{ConjAut}(\Gamma \mathcal{G})$ satisfies Kazhdan's Property (T) if and only if $\Gamma$ is complete and $G_u/Z(G_u)$ satisfies Kazhdan's Property (T) for every $u \in V(\Gamma)$.
\end{thmA}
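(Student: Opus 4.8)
The plan is to reduce everything to the subgroup $\mathrm{PConjAut}(\Gamma\mathcal{G})$ and then to combine two standard permanence properties of Kazhdan's property (T)---it passes to quotients, and it is shared between a group and any finite-index subgroup---with the fact that property (T) implies property (FA), i.e.\ every action on a simplicial tree has a global fixed point. The first observation is that, since $\Gamma$ is finite, $\mathrm{PConjAut}(\Gamma\mathcal{G})$ has finite index in $\mathrm{ConjAut}(\Gamma\mathcal{G})$: a conjugating automorphism induces a permutation $\sigma$ of $V(\Gamma)$ (well defined because distinct factors are pairwise non-conjugate), the assignment $\phi \mapsto \sigma$ is a homomorphism $\mathrm{ConjAut}(\Gamma\mathcal{G}) \to \mathrm{Sym}(V(\Gamma))$, and its kernel is exactly $\mathrm{PConjAut}(\Gamma\mathcal{G})$. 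Hence $\mathrm{ConjAut}(\Gamma\mathcal{G})/\mathrm{PConjAut}(\Gamma\mathcal{G})$ embeds into the finite group $\mathrm{Sym}(V(\Gamma))$, so $\mathrm{ConjAut}(\Gamma\mathcal{G})$ has property (T) if and only if $\mathrm{PConjAut}(\Gamma\mathcal{G})$ does, and throughout one may work with the latter.

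For the ``only if'' direction, suppose $\mathrm{ConjAut}(\Gamma\mathcal{G})$, equivalently $\mathrm{PConjAut}(\Gamma\mathcal{G})$, has property (T). If $\Gamma$ were not complete, the Main Theorem would provide a quotient $Q$ of $\mathrm{PConjAut}(\Gamma\mathcal{G})$ acting on a simplicial tree $T$ without a global fixed point. But $Q$, being a quotient of a group with property (T), again has property (T), hence property (FA), which forces $Q$ to fix a point of $T$---a contradiction. Therefore $\Gamma$ is complete. It then remains to extract the condition on the vertex-groups, which requires understanding $\mathrm{PConjAut}(\Gamma\mathcal{G})$ in the complete case.

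The key structural point is that when $\Gamma$ is complete the graph product is the direct product $\prod_{u} G_u$, conjugation by any element preserves each factor, and the induced automorphism of $G_u$ is inner; an elementary verification then shows that every tuple $(w_u)_u$ defines a conjugating automorphism acting on each $G_u$ as conjugation by the $G_u$-component of $w_u$, so that
\[
\mathrm{PConjAut}(\Gamma\mathcal{G}) \;\cong\; \prod_{u \in V(\Gamma)} G_u/Z(G_u).
\]
Granting this identification, the ``only if'' direction concludes: each $G_u/Z(G_u)$ is a direct factor, in particular a quotient, of $\mathrm{PConjAut}(\Gamma\mathcal{G})$, and thus inherits property (T). Conversely, for the ``if'' direction, if $\Gamma$ is complete and every $G_u/Z(G_u)$ has property (T), then $\mathrm{PConjAut}(\Gamma\mathcal{G})$ is a finite direct product of groups with property (T), hence has property (T) (a finite direct product has property (T) precisely when each factor does), and the finite-index comparison above transfers property (T) back to $\mathrm{ConjAut}(\Gamma\mathcal{G})$.

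I expect the main obstacle to be the structural identification in the complete case---pinning down that a pure conjugating automorphism of a direct product is nothing more than a coordinate-wise inner automorphism, and that the resulting parametrization is by $\prod_u G_u/Z(G_u)$ rather than, say, $\prod_u \mathrm{Aut}(G_u)$. Everything else is an assembly of standard permanence properties of property (T) (quotients, finite index, finite direct products) together with the implication that property (T) forces property (FA) and with the Main Theorem; by contrast it is in this identification that the precise definition of a conjugating automorphism does the real work and where care is needed.
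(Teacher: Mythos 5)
Your proposal is correct and follows essentially the same route as the paper: reduce to the finite-index subgroup $\mathrm{PConjAut}(\Gamma\mathcal{G})$, use the Main Theorem together with the fact that property (T) passes to quotients and implies property (FA) to rule out non-complete $\Gamma$, and identify $\mathrm{PConjAut}(\Gamma\mathcal{G})$ with $\bigoplus_{u} G_u/Z(G_u)$ in the complete case. The only difference is cosmetic: you spell out the finite-index claim via the permutation homomorphism to $\mathrm{Sym}(V(\Gamma))$, which the paper leaves implicit under ``stability under commensurability.''
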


Because Property (T) is stable under commensurability \cite[Theorem 1.7.1]{Bekka} and because the group of conjugating automorphisms of a graph product of finite groups has finite-index in the entire automorphism group \cite{Genevois}, we get that: 

\begin{cor}
Let $\Gamma$ be a finite simplicial graph and $\mathcal{G}$ a collection of finite groups indexed by $V(\Gamma)$. Then $\mathrm{Aut}(\Gamma \mathcal{G})$ satisfies Kazhdan's Property (T) if and only if $\Gamma \mathcal{G}$ is finite.
\end{cor}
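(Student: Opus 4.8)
The plan is to combine the finite-index statement for $\mathrm{ConjAut}$ with Theorem A, and then to observe that over finite vertex-groups the criterion appearing in Theorem A collapses to a purely combinatorial condition on $\Gamma$ which turns out to be equivalent to the finiteness of the graph product. First I would invoke the result from \cite{Genevois} that $\mathrm{ConjAut}(\Gamma \mathcal{G})$ has finite index in $\mathrm{Aut}(\Gamma \mathcal{G})$, so that these two groups are commensurable; since Kazhdan's Property (T) is a commensurability invariant \cite[Theorem 1.7.1]{Bekka}, it suffices to determine when $\mathrm{ConjAut}(\Gamma \mathcal{G})$ has Property (T). This is precisely the content of Theorem A, which demands that $\Gamma$ be complete and that $G_u/Z(G_u)$ have Property (T) for every $u \in V(\Gamma)$.

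The next step is to eliminate the second condition. Since each $G_u$ is finite, every quotient $G_u/Z(G_u)$ is a finite group, and finite groups always satisfy Property (T); hence the second condition in Theorem A holds automatically in the present setting. Consequently, $\mathrm{ConjAut}(\Gamma \mathcal{G})$ has Property (T) if and only if $\Gamma$ is complete, and therefore, by the commensurability reduction above, $\mathrm{Aut}(\Gamma \mathcal{G})$ has Property (T) if and only if $\Gamma$ is complete.

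It then remains to verify that, for a finite graph $\Gamma$ and finite vertex-groups, $\Gamma$ is complete if and only if $\Gamma \mathcal{G}$ is finite. For the forward direction, if $\Gamma$ is complete then $\Gamma \mathcal{G}$ is by definition the direct product of the groups $G_u$; as $\Gamma$ has finitely many vertices and each $G_u$ is finite, this direct product is finite. For the converse I would argue by contraposition: if $\Gamma$ is not complete, choose two non-adjacent vertices $u, v$; the standard fact that a graph product contains the graph product on any induced subgraph produces an embedded copy of the free product $G_u \ast G_v$, which is infinite because both factors are non-trivial, forcing $\Gamma \mathcal{G}$ to be infinite. Chaining the three equivalences yields the statement. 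I do not anticipate any genuine obstacle here, since the argument is a clean concatenation of the already-established results; the only point requiring a word of justification is the embedding of $G_u \ast G_v$ into $\Gamma \mathcal{G}$, and this follows from the usual retraction property of graph products onto the sub–graph-products induced by subsets of vertices.
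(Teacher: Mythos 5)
Your proposal is correct and follows the same route as the paper, which derives the corollary from Theorem A together with the finite-index statement for $\mathrm{ConjAut}$ from \cite{Genevois} and the commensurability invariance of Property (T); the paper simply leaves implicit the final observations you spell out (finite groups have Property (T), and $\Gamma$ is complete if and only if $\Gamma\mathcal{G}$ is finite). Your justification of that last equivalence via the retraction onto $G_u \ast G_v$ for non-adjacent $u,v$ is the standard and correct argument.
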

A good reference for the above corollary is \cite{LV}. We also deduce from the combination of Theorem A with \cite[Theorem 3.23]{Genevois} the following statement, which generalises \cite[Theorem~F]{Genevois}:

\begin{cor}
Let $\Gamma$ be a simplicial graph and $\mathcal{G}$ a collection of groups indexed by $V(\Gamma)$. If $\Gamma$ is a molecular graph, then $\mathrm{Aut}(\Gamma \mathcal{G})$ satisfies Kazhdan's Property (T) if and only if $\Gamma$ is a single vertex or a single edge and if $G_u/Z(G_u)$ satisfies Kazhdan's Property (T) for every $u \in V(\Gamma)$.
\end{cor}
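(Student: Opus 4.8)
The plan is to reduce the statement to Theorem~A by comparing $\mathrm{Aut}(\Gamma\mathcal{G})$ with its subgroup $\mathrm{ConjAut}(\Gamma\mathcal{G})$. First I would invoke \cite[Theorem 3.23]{Genevois}: since $\Gamma$ is \emph{molecular}, $\mathrm{ConjAut}(\Gamma\mathcal{G})$ sits as a finite-index subgroup of $\mathrm{Aut}(\Gamma\mathcal{G})$. As Kazhdan's property (T) is invariant under commensurability \cite[Theorem 1.7.1]{Bekka}, it follows immediately that $\mathrm{Aut}(\Gamma\mathcal{G})$ has property (T) if and only if $\mathrm{ConjAut}(\Gamma\mathcal{G})$ does. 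This is the only place where molecularity is used to pass from the full automorphism group to the conjugating one.

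Next I would apply Theorem~A to $\mathrm{ConjAut}(\Gamma\mathcal{G})$: it has property (T) exactly when $\Gamma$ is complete and $G_u/Z(G_u)$ has property (T) for every $u \in V(\Gamma)$. Combined with the previous paragraph, this already yields that $\mathrm{Aut}(\Gamma\mathcal{G})$ has property (T) if and only if $\Gamma$ is complete and every $G_u/Z(G_u)$ has property (T). To recover the exact wording of the corollary, the remaining point is combinatorial and internal to the notion of molecular graph: one checks that a molecular graph is complete if and only if it is a single vertex or a single edge, larger complete graphs failing to be molecular. Substituting this equivalence into the condition that $\Gamma$ is complete then gives precisely the claimed statement.

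The genuinely load-bearing steps are the finite-index reduction and this last combinatorial identification, rather than any new argument about unitary representations. The hard part is to make sure that \cite[Theorem 3.23]{Genevois} really applies in the generality claimed, so that the finite-index comparison between $\mathrm{ConjAut}(\Gamma\mathcal{G})$ and $\mathrm{Aut}(\Gamma\mathcal{G})$ is legitimate, and, on the graph side, to confirm directly from the definition of a molecular graph that the only complete molecular graphs are $K_1$ and $K_2$. Once both are secured, the equivalence is a purely formal consequence of Theorem~A and the commensurability invariance of property (T); in particular the degenerate case $\Gamma = K_1$, where $\mathrm{ConjAut}(\Gamma\mathcal{G}) = \mathrm{Aut}(\Gamma\mathcal{G}) = \mathrm{Aut}(G_u)$, already falls under Theorem~A and needs no separate treatment.
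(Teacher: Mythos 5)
The paper itself does not write out a proof of this corollary: it is stated as following ``from the combination of Theorem A with \cite[Theorem 3.23]{Genevois}'', so your task is genuinely one of reconstruction, and the overall shape of your argument (reduce to Theorem A via the structure of $\mathrm{Aut}(\Gamma\mathcal{G})$ for molecular graphs, then identify the complete molecular graphs) is the intended one. However, the single load-bearing step you rely on --- that \cite[Theorem 3.23]{Genevois} gives $[\mathrm{Aut}(\Gamma\mathcal{G}):\mathrm{ConjAut}(\Gamma\mathcal{G})]<\infty$ for molecular $\Gamma$ --- cannot be correct in the stated generality, because the vertex groups here are arbitrary. The group $\mathrm{Aut}(\Gamma\mathcal{G})$ always contains $\prod_u \mathrm{Aut}(G_u)$ (factor automorphisms), whereas $\mathrm{ConjAut}(\Gamma\mathcal{G})$ meets this subgroup only in (essentially) $\prod_u \mathrm{Inn}(G_u)$; take $\Gamma$ a pentagon with every $G_u$ a free group of rank $2$ and the index is already infinite. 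The finite-index statement is \cite[Theorem 3.15]{Genevois} and is specific to \emph{finite} vertex groups (it is what the paper uses for the first corollary and for Proposition E). Theorem 3.23 is instead a structure theorem (every automorphism is a conjugating automorphism composed with a ``local'' one), and turning that into ``$\Gamma$ not complete $\Rightarrow$ $\mathrm{Aut}(\Gamma\mathcal{G})$ fails (T)'' requires actual work: for instance, pass to the finite-index subgroup of automorphisms whose induced permutation of $V(\Gamma)$ preserves the chosen non-adjacent pair $\{u,v\}$, check that these preserve the normal closure of the remaining vertex groups, and map onto a group of symmetric automorphisms of $G_u \ast G_v$ acting fixed-point freely on the Bass--Serre tree. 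Commensurability invariance of (T) alone does not bridge this gap, since a subgroup of infinite index failing (T) says nothing about the ambient group.

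Your treatment of the degenerate cases is also off. Under the definition of conjugating automorphism that the paper actually uses (the one consistent with the computation $\mathrm{PConjAut}(\Gamma\mathcal{G}) = \bigoplus_u \mathrm{Inn}(G_u)$ for complete $\Gamma$ in the proof of Theorem A, i.e.\ the restriction to each factor is a conjugation), one has $\mathrm{ConjAut}(G_u)=\mathrm{Inn}(G_u)$ for a single vertex, \emph{not} $\mathrm{Aut}(G_u)$; so your claim that $K_1$ ``already falls under Theorem A and needs no separate treatment'' conflates $\mathrm{Aut}(G_u)$ with $\mathrm{Inn}(G_u)\simeq G_u/Z(G_u)$, and these can differ by an infinite, non-Kazhdan $\mathrm{Out}(G_u)$. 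Finally, the combinatorial identification needs the convention for ``leafless'' to be pinned down: a single edge consists of two degree-one vertices, so whether $K_1$ and $K_2$ are molecular at all (and hence whether the ``single vertex or single edge'' clause of the equivalence is ever realised) depends on a convention you have not verified. The direction you do need --- a molecular graph with at least three vertices is triangle-free, hence not complete --- is fine.
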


Recall from \cite{Genevois} that a graph is \emph{molecular} if it is finite, connected, leafless and if it has girth $\geq 5$ (ie., it is triangle- and square-free). 

Our second main application deals with SQ-universality. Recall that a group $G$ is \emph{SQ-universal} if, for every countable group $Q$, there exists $N \lhd G$ such that $Q$ embeds into $G/N$. Interestingly, a SQ-universal group has to contain a non-abelian free subgroup and uncountably many normal subgroups. The last observation motivates the idea that being SQ-universal can be thought of as a strong negation of being simple. The first known examples of SQ-universal groups are free groups, so that \emph{large groups} (ie., groups containing finite-index subgroups which surject onto a non-abelian free subgroup) are also SQ-universal. Now, thanks to the theory of acylindrically hyperbolic groups, we know that many groups turn out to be SQ-universal; see \cite{OsinAcyl} and references therein for more information. 

\begin{thmB}
Let $\Gamma$ be a finite simplicial graph and $\mathcal{G}$ a collection of finitely generated groups indexed by $V(\Gamma)$. Decompose $\Gamma$ as a join $\Lambda \ast \Xi$ where $\Lambda$ is the subgraph generated by the vertices which are adjacent to all the vertices of $\Gamma$. Then $\mathrm{ConjAut}(\Gamma \mathcal{G})$ is SQ-universal if and only if $\Xi$ contains a vertex which is not labelled by $\mathbb{Z}_2$, or $\Lambda$ contains a vertex $u$ such that $G_u/Z(G_u)$ is SQ-universal, or $\Xi$ does not decompose as the join of a complete graph with copies of the disjoint union of two single vertices.
\end{thmB}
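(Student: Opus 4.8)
The plan is to reduce everything to the two building blocks of the join decomposition $\Gamma = \Lambda \ast \Xi$ and then feed the non-complete block into the Main Theorem. First I would pass from $\mathrm{ConjAut}$ to $\mathrm{PConjAut}$: since $\Gamma$ is finite, the quotient $\mathrm{ConjAut}(\Gamma\mathcal{G})/\mathrm{PConjAut}(\Gamma\mathcal{G})$ embeds into the finite symmetric group on $V(\Gamma)$, so $\mathrm{PConjAut}$ has finite index, and as SQ-universality is a commensurability invariant (a finite-index overgroup of an SQ-universal group is SQ-universal, and conversely, by Neumann's theorem) it suffices to decide when $\mathrm{PConjAut}(\Gamma\mathcal{G})$ is SQ-universal. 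Next I would establish the product decomposition
\[ \mathrm{PConjAut}(\Gamma\mathcal{G}) \cong \Big( \prod_{u \in \Lambda} G_u/Z(G_u) \Big) \times \mathrm{PConjAut}(\Xi\mathcal{G}), \]
where $\Xi\mathcal{G}$ denotes the graph product over the full subgraph $\Xi$. This uses that each $u \in \Lambda$ is adjacent to every other vertex, so $G_u$ is a direct factor commuting with everything; hence the only conjugating automorphisms affecting $G_u$ are the inner ones coming from $G_u$ itself, contributing $G_u/Z(G_u) = \mathrm{Inn}(G_u)$, while no element outside $G_\Lambda$ conjugates an $\Xi$-factor non-trivially. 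Finally I would invoke the dichotomy that a finite direct product is SQ-universal if and only if one of its factors is, reducing the problem to the two blocks.

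For sufficiency, the $\Lambda$-block handles condition~(2): if some $G_u/Z(G_u)$ with $u \in \Lambda$ is SQ-universal, then it is a direct factor of $\mathrm{PConjAut}(\Gamma\mathcal{G})$, which is therefore SQ-universal (the easy direction of the dichotomy, i.e.\ projecting onto an SQ-universal quotient). The $\Xi$-block handles conditions~(1) and~(3). By construction $\Xi$ has no vertex adjacent to all the others; if conditions~(1) or~(3) hold then $\Xi$ is non-empty, hence has at least two vertices and is not complete, so the Main Theorem applies to $\Xi\mathcal{G}$ and yields a quotient $Q$ of $\mathrm{PConjAut}(\Xi\mathcal{G})$ acting fixed-point freely on a simplicial tree $T$ with a WPD isometry. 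Such an action is either lineal, with virtually cyclic image, or non-elementary; in the latter case $Q$ is acylindrically hyperbolic, hence SQ-universal by the work of Dahmani--Guirardel--Osin (see \cite{OsinAcyl}), and then so are $\mathrm{PConjAut}(\Xi\mathcal{G})$ and in turn $\mathrm{PConjAut}(\Gamma\mathcal{G})$. So the real task is to certify that the action is non-elementary, equivalently that $Q$ is not virtually cyclic, exactly when condition~(1) or~(3) holds.

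For necessity, suppose all three conditions fail. Then every vertex of $\Xi$ carries $\mathbb{Z}_2$, and $\Xi$ decomposes as a join of a complete graph with copies of a pair of non-adjacent vertices; since $\Xi$ has no universal vertex the complete part must be empty, so $\Xi$ is a join of such pairs and $\Xi\mathcal{G} \cong (\mathbb{Z}_2 \ast \mathbb{Z}_2)^{k} = D_\infty^{\,k}$ for some $k \geq 0$. A direct computation based on $\mathrm{Aut}(D_\infty) \cong D_\infty$ shows that $\mathrm{PConjAut}(D_\infty^{\,k})$ is virtually abelian: it contains $\mathrm{Inn}(D_\infty^{\,k}) = D_\infty^{\,k}$ with finite index, and $D_\infty^{\,k}$ is virtually $\mathbb{Z}^{k}$. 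In particular it is not SQ-universal. Combined with the hypothesis that no $G_u/Z(G_u)$ with $u \in \Lambda$ is SQ-universal, and with the hard direction of the dichotomy (a finite product of non-SQ-universal groups is not SQ-universal), the decomposition shows $\mathrm{PConjAut}(\Gamma\mathcal{G})$, and therefore $\mathrm{ConjAut}(\Gamma\mathcal{G})$, is not SQ-universal.

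The main obstacle is the upgrade in the $\Xi$-block: the Main Theorem delivers only a single WPD isometry, which by itself does not rule out a virtually cyclic image, so the heart of the argument is to show that the tree action of $Q$ is genuinely non-elementary precisely outside the degenerate family $D_\infty^{\,k}$. I expect to do this by analysing the splitting underlying the Main Theorem, exhibiting two independent loxodromic elements whenever $\Xi$ has a non-$\mathbb{Z}_2$ vertex or fails to be a join of $\mathbb{Z}_2$-pairs, and conversely tracing a virtually cyclic image back to the degenerate combinatorics. A secondary technical point is the hard direction of the direct-product dichotomy for SQ-universality used in the necessity part; this, together with the explicit identification of $\Xi\mathcal{G}$ and the computation of $\mathrm{PConjAut}(D_\infty^{\,k})$, is what pins down the exact list of exceptional cases.
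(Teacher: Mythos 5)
Your overall architecture (reduce to $\mathrm{PConjAut}$, split off the $\Lambda$-block, feed a non-complete block into the Main Theorem, and compute the degenerate case $D_\infty^{\,k}$ by hand) tracks the paper for the first and second conditions and for the necessity direction. But there is a genuine gap, and it sits exactly where you locate "the heart of the argument": sufficiency of the third condition, i.e.\ the case where every vertex of $\Xi$ carries $\mathbb{Z}_2$ but $\Xi$ is not a join of a complete graph with copies of two-point discrete graphs. Your plan is to upgrade the tree action furnished by the Main Theorem to a non-elementary one by finding two independent loxodromics. This cannot work: the quotient $Q$ produced by the Main Theorem is, by construction, the image of $\mathrm{PConjAut}$ inside $\mathrm{PConjAut}(G_u \ast G_v)$ for a single non-adjacent pair $u,v$, and when all vertices of $\Xi$ are labelled $\mathbb{Z}_2$ every such quotient is contained in $\mathrm{Aut}(\mathbb{Z}_2 \ast \mathbb{Z}_2) \cong D_\infty$ acting on a line -- virtually cyclic and elementary no matter how the pair is chosen. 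Concrete test cases: $\Xi$ equal to three pairwise non-adjacent vertices, or to a $5$-cycle, both satisfy condition (3) and hence must yield SQ-universality, yet every tree the Main Theorem hands you there is a line with virtually cyclic image. A genuinely different ingredient is required, and this is what the paper supplies: if $\Xi$ contains a SIL then $\mathrm{Out}(C_\Xi)$ is large by Sale--Susse; if not, $\mathrm{Out}(C_\Xi)$ is finite by Gutierrez--Piggott--Ruane, so $\mathrm{ConjAut}(C_\Xi)$ is commensurable with $C_\Xi/Z(C_\Xi)$, and one concludes with Davis's largeness criterion for right-angled Coxeter groups (the paper's Lemma~\ref{lem:RACGSQ}). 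None of this is recoverable from the splitting underlying the Main Theorem.

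Two secondary points. First, your global decomposition $\mathrm{PConjAut}(\Gamma\mathcal{G}) \cong \bigl(\prod_{u\in\Lambda} G_u/Z(G_u)\bigr) \times \mathrm{PConjAut}(\Xi\mathcal{G})$ is asserted, not proved, and you invoke it in more generality than needed; the paper sidesteps this by first disposing of any non-adjacent pair not both labelled $\mathbb{Z}_2$ via acylindrical hyperbolicity of $\mathrm{PConjAut}(G_u\ast G_v)$ (which contains the non-virtually-cyclic $\mathrm{Inn}(G_u\ast G_v)$), and only uses the direct-sum decomposition in the remaining case where $\Xi$ is entirely $\mathbb{Z}_2$-labelled. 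Second, the "hard direction of the direct-product dichotomy" you defer is indeed a citable fact (Guirardel--Sale, Lemma~1.12 of \cite{Vastness}), so that part is fine, but it does need the citation rather than a promise.
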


Actually, the strategy used to prove Theorem B can be also applied to other \emph{vastness properties}, including virtually having an infinite-dimensional space of quasimorphisms and not being boundedly generated.

Recall that, given a group $G$, a \emph{quasimorphism} is a map $\varphi : G \to \mathbb{R}$ such that there exists some constant $C \geq 0$ so that
$$| \varphi(gh) - \varphi(g)- \varphi(h) | \leq C \ \text{for every $g,h \in G$}.$$
Such a quasimorphism is \emph{homogeneous} if $\varphi(g^k)=k \cdot \varphi(g)$ for every $g \in G$ and $k \in \mathbb{Z}$. Notice that the set of all the homogeneous quasimorphisms of our group $G$ is naturally a $\mathbb{R}$-vector space, which we denote by $\mathrm{QH}(G)$. Following \cite{Vastness}, we say that $G$ \emph{virtually has many quasimorphisms} if it contains a finite-index subgroup $H$ such that $\mathrm{QH}(H)$ is infinite-dimensional. 

\begin{thmC}
Let $\Gamma$ be a finite simplicial graph and $\mathcal{G}$ a collection of finitely generated groups indexed by $V(\Gamma)$. Decompose $\Gamma$ as a join $\Lambda \ast \Xi$ where $\Lambda$ is the subgraph generated by the vertices which are adjacent to all the vertices of $\Gamma$. Then $\mathrm{ConjAut}(\Gamma \mathcal{G})$ virtually has many quasimorphisms if and only if $\Xi$ contains a vertex which is not labelled by $\mathbb{Z}_2$, or $\Lambda$ contains a vertex $u$ such that $G_u/Z(G_u)$ virtually has many quasimorphisms, or $\Xi$ does not decompose as the join of a complete graph with copies of the disjoint union of two single vertices.
\end{thmC}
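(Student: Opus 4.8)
The plan is to follow the strategy of Theorem B, replacing the implication ``acylindrically hyperbolic $\Rightarrow$ SQ-universal'' of Dahmani--Guirardel--Osin by the implication ``acylindrically hyperbolic $\Rightarrow$ $\mathrm{QH}$ infinite-dimensional'' of Hull--Osin, and replacing the SQ-universality pull-back arguments by the analogous statements for homogeneous quasimorphisms. Throughout I use two soft facts. First, since $\Gamma$ is finite, $\mathrm{PConjAut}(\Gamma\mathcal{G})$ has finite index in $\mathrm{ConjAut}(\Gamma\mathcal{G})$ (the quotient embeds into the finite group of permutations induced on the factors), so $\mathrm{ConjAut}(\Gamma\mathcal{G})$ virtually has many quasimorphisms if and only if $\mathrm{PConjAut}(\Gamma\mathcal{G})$ does. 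Second, homogeneous quasimorphisms pull back along surjections and are additive over finite direct products, and virtually having many quasimorphisms is inherited by and from finite-index subgroups; hence it suffices to produce, on some finite-index subgroup of $\mathrm{PConjAut}(\Gamma\mathcal{G})$, either an infinite-dimensional space of quasimorphisms, or a surjection onto (respectively a direct factor isomorphic to) a group that virtually has many quasimorphisms.

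For the direction proving sufficiency I treat the three clauses exactly as in the proof of Theorem B. If $\Xi$ has a vertex not labelled $\mathbb{Z}_2$, or if $\Xi$ is not a join of a complete graph with copies of $\overline{K_2}$, then in particular $\Gamma$ is not complete, so the Main Theorem applies and $\mathrm{PConjAut}(\Gamma\mathcal{G})$ surjects onto a group $Q$ acting on a simplicial tree $T$ with a WPD isometry. The content borrowed from Theorem B is precisely that, under either clause, this action is non-elementary, i.e. $Q$ is not virtually cyclic; by Osin's criterion $Q$ is then acylindrically hyperbolic, so $\mathrm{QH}(Q)$ is infinite-dimensional by Hull--Osin, and pulling back along $\mathrm{PConjAut}(\Gamma\mathcal{G}) \twoheadrightarrow Q$ shows that $\mathrm{PConjAut}(\Gamma\mathcal{G})$ has many quasimorphisms. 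For the remaining clause, let $u \in \Lambda$ with $G_u/Z(G_u)$ virtually having many quasimorphisms. Since $u$ is adjacent to every other vertex, conjugation by $G_u$ realises $G_u/Z(G_u) = \mathrm{Inn}(G_u)$ inside $\mathrm{PConjAut}(\Gamma\mathcal{G})$, and using the structural description of $\mathrm{PConjAut}(\Gamma\mathcal{G})$ already exploited in Theorem B, this copy is, up to finite index, a direct factor; the quasimorphisms of a finite-index subgroup of $G_u/Z(G_u)$ therefore survive on a finite-index subgroup of $\mathrm{PConjAut}(\Gamma\mathcal{G})$.

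The direction proving necessity is where the work specific to quasimorphisms lies. Assuming all three clauses fail, $\Xi$ has no universal vertex and decomposes as a join of copies of $\overline{K_2}$ with all vertices labelled $\mathbb{Z}_2$, so that $\Xi\mathcal{G}$ is a direct product of infinite dihedral groups, while every vertex of $\Lambda$ carries a group $G_u$ with $G_u/Z(G_u)$ not virtually having many quasimorphisms. The plan is to feed this degenerate shape into the structural description of $\mathrm{ConjAut}(\Gamma\mathcal{G})$, to check that each resulting building block has finite-dimensional space of homogeneous quasimorphisms after passing to any finite-index subgroup, and then to assemble these using that $\mathrm{QH}(-)$ is additive over finite direct products and unchanged under finite (more generally amenable) kernels, so that no finite-index subgroup of $\mathrm{ConjAut}(\Gamma\mathcal{G})$ can acquire an infinite-dimensional space of quasimorphisms.

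I expect the main obstacle to be this last step: controlling the homogeneous quasimorphisms of the whole degenerate group rather than of its pieces. Unlike SQ-universality, which is detected by a single non-elementary acylindrical action and is therefore ruled out as soon as every such action is elementary, finite-dimensionality of $\mathrm{QH}$ is not automatically inherited through the semidirect-product extensions by the partial-conjugation subgroups appearing in the structure of $\mathrm{PConjAut}(\Gamma\mathcal{G})$. The delicate point is thus to show that these extensions create no new quasimorphisms, which I would handle by identifying the relevant normal subgroups as, up to finite index, products of dihedral and central pieces over which homogeneous quasimorphisms are forced either to vanish or to factor through the already-controlled quotients, keeping the full space of quasimorphisms of every finite-index subgroup finite-dimensional.
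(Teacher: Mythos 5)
Your overall skeleton matches the paper's: the authors prove Theorem C by verifying that ``virtually having many quasimorphisms'' satisfies an axiom list (acylindrically hyperbolic groups have it, it pulls back along quotients, it passes to and from finite-index subgroups, it is detected by direct factors, free abelian groups fail it) and then running a single general argument (their Theorem F). However, your treatment of the third clause --- ``$\Xi$ does not decompose as the join of a complete graph with copies of the disjoint union of two single vertices'' --- contains a genuine gap. You claim that under this clause the quotient $Q$ furnished by the Main Theorem acts non-elementarily on its tree. That is false in general: take $\Gamma=\Xi$ a pentagon with every vertex labelled by $\mathbb{Z}_2$. This $\Xi$ is not a join of the required form, yet for \emph{every} pair of non-adjacent vertices $u,v$ the quotient produced by the Main Theorem is $\mathrm{PConjAut}(\mathbb{Z}_2\ast\mathbb{Z}_2)\simeq\mathrm{Aut}(\mathbb{D}_\infty)$, which is virtually cyclic, so no WPD action obtained this way is non-elementary. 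The paper handles this case by a mechanism entirely absent from your proposal: when all non-adjacent pairs are labelled $\mathbb{Z}_2$, one reduces to $\mathrm{Aut}(C_\Xi)$ for the right-angled Coxeter group $C_\Xi$ and invokes the SIL dichotomy --- if $\Xi$ has a SIL then $\mathrm{Out}(C_\Xi)$ is large by Sale--Susse, and if not then $\mathrm{Out}(C_\Xi)$ is finite by Gutierrez--Piggott--Ruane and one applies largeness of $C_\Xi$ itself (Davis). Largeness, not the Main Theorem's tree, is what yields the acylindrically hyperbolic (free) quotient here.

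The same missing ingredient undermines your necessity direction. You correctly identify the delicate point --- controlling the quasimorphisms of the ``partial-conjugation'' part of $\mathrm{PConjAut}(C_\Xi)$ --- but you do not resolve it, and the resolution is precisely the GPR theorem: when $\Xi$ has no SIL (which holds whenever $\Xi$ is a join of a complete graph with copies of $\overline{K_2}$), $\mathrm{Out}(C_\Xi)$ is \emph{finite}, so $\mathrm{PConjAut}(C_\Xi)$ is virtually $\mathrm{Inn}(C_\Xi)$, a finite direct sum of copies of $\mathbb{Z}_2$ and $\mathbb{D}_\infty$, hence virtually abelian with finite-dimensional $\mathrm{QH}$ on every finite-index subgroup. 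There are then no semidirect-product extensions to worry about: in the degenerate case $\mathrm{PConjAut}(\Gamma\mathcal{G})$ is literally a direct sum $\mathrm{PConjAut}(C_\Xi)\oplus\bigoplus_{u}\mathrm{Inn}(G_u)$, and the additivity of $\mathrm{QH}$ over direct sums finishes the argument. Without the finiteness of $\mathrm{Out}(C_\Xi)$ in the SIL-free case (and its largeness in the SIL case), both directions of your proof are incomplete for graphs all of whose non-adjacent vertex pairs are labelled $\mathbb{Z}_2$.
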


Next, recall that a group $G$ is \emph{boundedly generated} if there exist finitely many elements $g_1, \ldots, g_n \in G$ such that, for every $g \in G$, there exist integers $m_1, \ldots, m_n \in \mathbb{Z}$ so that $g = g_1^{m_1} \cdots g_n^{m_n}$. 

\begin{thmD}
Let $\Gamma$ be a finite simplicial graph and $\mathcal{G}$ a collection of finitely generated groups indexed by $V(\Gamma)$. Decompose $\Gamma$ as a join $\Lambda \ast \Xi$ where $\Lambda$ is the subgraph generated by the vertices which are adjacent to all the vertices of $\Gamma$. Then $\mathrm{ConjAut}(\Gamma \mathcal{G})$ is boundedly generated if and only if the vertices of $\Xi$ are all labelled by $\mathbb{Z}_2$, for every vertex $u$ of $\Lambda$ the group $G_u/Z(G_u)$ is boundedly generated, and $\Xi$ decomposes as the join of a complete graph with copies of the disjoint union of two single vertices.
\end{thmD}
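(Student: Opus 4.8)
The plan is to mirror the proofs of Theorems B and C: reduce everything to a splitting of $\mathrm{ConjAut}(\Gamma \mathcal{G})$ along the join $\Gamma = \Lambda \ast \Xi$, and then read off bounded generation from the two resulting pieces. First I would record the closure properties driving the argument. Bounded generation passes to quotients (a surjective image of a product of finitely many cyclic subgroups is again such a product) and is inherited by finite extensions: if $H \leq G$ has finite index with transversal $t_1, \dots, t_m$ and $H = \langle a_1\rangle \cdots \langle a_n\rangle$, then writing $g = h t_i$ gives $G = \langle a_1\rangle \cdots \langle a_n\rangle \langle t_1\rangle \cdots \langle t_m\rangle$. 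On the other side, the sole obstruction to bounded generation will be an acylindrically hyperbolic quotient: I will use that an acylindrically hyperbolic group carries an infinite-dimensional space of homogeneous quasimorphisms while a boundedly generated group carries only a finite-dimensional one, so that acylindrically hyperbolic groups are never boundedly generated.

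For the structural reduction, note that the vertices of $\Lambda$ are pairwise adjacent, so $\Lambda$ is a clique and $\Gamma \mathcal{G} = \left( \prod_{u \in V(\Lambda)} G_u \right) \times \Xi \mathcal{G}$ with the first factor central. Since automorphisms preserve centrality of factors, and since conjugating a central factor (or conjugating by one) acts trivially on the others, no conjugating automorphism mixes the $\Lambda$-factors with the $\Xi$-factors. Using the structural description of conjugating automorphisms established earlier in the paper, I would then obtain a splitting $\mathrm{ConjAut}(\Gamma \mathcal{G}) \cong A_\Lambda \times \mathrm{ConjAut}(\Xi \mathcal{G})$, where $A_\Lambda$ is a finite extension, by the permutations of isomorphic central factors, of the inner part $\prod_{u \in V(\Lambda)} G_u/Z(G_u)$.

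For the direction ``conditions imply boundedly generated'', assume the three conditions. Then $\Xi$ is empty or a join of $k \geq 1$ copies of $\overline{K_2}$ (the complete-graph factor must be empty, as $\Xi$ has no dominating vertex) with all labels $\mathbb{Z}_2$, so $\Xi \mathcal{G} \cong D_\infty^k$ is virtually $\mathbb{Z}^k$. In this situation every vertex of $\Xi$ has a unique non-neighbour, so the complement of the star of any vertex is a single vertex and every partial conjugation coincides with a global inner automorphism; hence $\mathrm{ConjAut}(\Xi \mathcal{G})$ is inner-by-finite, i.e. virtually abelian, in particular boundedly generated. By condition 2 the group $\prod_{u \in V(\Lambda)} G_u/Z(G_u)$ is a finite product of boundedly generated groups, hence boundedly generated, so $A_\Lambda$ is boundedly generated by the finite-extension remark. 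A finite direct product of boundedly generated groups being boundedly generated, we conclude. For the converse I argue contrapositively: if condition 2 fails, some $G_u/Z(G_u)$ with $u \in V(\Lambda)$ is not boundedly generated, and it is a quotient of $\mathrm{ConjAut}(\Gamma \mathcal{G})$, so the latter is not boundedly generated. If instead condition 1 or 3 fails, then $\Xi$, hence $\Gamma$, is not complete, so the Main Theorem furnishes a quotient $Q$ of $\mathrm{PConjAut}(\Gamma \mathcal{G})$ acting on a tree with a WPD isometry.

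The crux, and what I expect to be the main obstacle, is twofold. First, one must show that this tree action is non-elementary precisely when condition 1 or 3 fails, so that $Q$ is genuinely acylindrically hyperbolic; in the cocktail-party, all-$\mathbb{Z}_2$ case the action is elementary and $Q$ is virtually cyclic, which is exactly why that case is boundedly generated. The combinatorics of $\Xi$ must therefore be converted into two independent loxodromic WPD isometries: either from a non-$\mathbb{Z}_2$ label, or from a vertex whose complement in $\Xi$ contains at least two vertices. Granting this, $Q$ is not boundedly generated, hence neither is $\mathrm{PConjAut}(\Gamma \mathcal{G})$. Second, one must transfer non-bounded-generation from the finite-index subgroup $\mathrm{PConjAut}(\Gamma \mathcal{G})$ to $\mathrm{ConjAut}(\Gamma \mathcal{G})$; bounded generation is well behaved under finite extensions but not obviously under passage to finite-index subgroups, so I would not route the argument through the latter. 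Instead I would promote acylindrical hyperbolicity to the full group, checking that the construction of $Q$ is canonical enough that the kernel of $\mathrm{PConjAut}(\Gamma \mathcal{G}) \to Q$ is normal in $\mathrm{ConjAut}(\Gamma \mathcal{G})$, whence the corresponding quotient of $\mathrm{ConjAut}(\Gamma \mathcal{G})$ contains $Q$ with finite index and is itself acylindrically hyperbolic by the commensurability invariance of acylindrical hyperbolicity. Such a quotient cannot be boundedly generated, so $\mathrm{ConjAut}(\Gamma \mathcal{G})$ is not boundedly generated, completing the contrapositive.
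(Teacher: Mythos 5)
Your overall architecture (split off the $\Lambda$-part as a direct factor, invoke the Main Theorem's WPD action when some non-adjacent pair is not both $\mathbb{Z}_2$, use virtual abelianness in the boundedly generated case) matches the paper's, but there is a genuine gap in the converse direction, exactly at the point you yourself flag as ``the crux''. When every vertex of $\Xi$ is labelled $\mathbb{Z}_2$ but $\Xi$ does not decompose as a join of a complete graph with copies of the disjoint union of two vertices (e.g.\ $\Xi$ consisting of three pairwise non-adjacent vertices, so $C_\Xi \cong \mathbb{Z}_2 \ast \mathbb{Z}_2 \ast \mathbb{Z}_2$), the quotient furnished by the Main Theorem is $\mathrm{PConjAut}(G_u \ast G_v) = \mathrm{PConjAut}(\mathbb{D}_\infty)$ for a single non-adjacent pair $\{u,v\}$: this group is virtually cyclic, hence boundedly generated, and yields no obstruction no matter which pair you pick. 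Your hope of ``converting the combinatorics of $\Xi$ into two independent loxodromic WPD isometries'' cannot be realised with the tree the Main Theorem supplies, since that tree only remembers two vertex groups. The paper closes this case by entirely different means (Proposition \ref{prop:RACG}): if $\Xi$ contains a SIL, then $\mathrm{Out}(C_\Xi)$ is large by Sale--Susse, hence not boundedly generated; if not, $\mathrm{Out}(C_\Xi)$ is finite by Gutierrez--Piggott--Ruane, so $\mathrm{Aut}(C_\Xi)$ is virtually $C_\Xi/Z(C_\Xi)$, and one then appeals to the dichotomy of Lemma \ref{lem:RACGSQ} (large versus virtually abelian) for right-angled Coxeter groups. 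None of this input appears in your proposal, so that case is simply unproved.

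A secondary problem: your decision to avoid the fact that bounded generation passes to finite-index subgroups undermines other steps. When condition 2 fails, you assert that $G_u/Z(G_u)$ ($u \in V(\Lambda)$) is a quotient of $\mathrm{ConjAut}(\Gamma \mathcal{G})$; what one gets readily is that it is a direct factor of the finite-index subgroup $\mathrm{PConjAut}(\Gamma \mathcal{G})$, or that a \emph{finite extension} $A_\Lambda$ of $\prod_u G_u/Z(G_u)$ is a quotient of $\mathrm{ConjAut}(\Gamma\mathcal{G})$ --- and concluding that a finite extension of a non-boundedly-generated group is not boundedly generated is precisely the finite-index descent you wished to sidestep. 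That descent is true and standard; it is the paper's condition (iii), and accepting it (as the paper does, by proving the abstract Theorem F for any property satisfying (i)--(v) and then checking in Proposition \ref{prop:Vast} that ``not boundedly generated'' qualifies) is cleaner than the normal-kernel workaround you sketch, which moreover quietly relies on acylindrical hyperbolicity passing from a finite-index subgroup to the ambient group.
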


Because the group of conjugating automorphisms of a graph product of finite groups turns out to have finite-index in the entire automorphism group \cite{Genevois}, we deduce the following statement from the theorems above:

\begin{propE}
Let $\Gamma$ be a finite simplicial graph and $\mathcal{G}$ a collection of finite groups indexed by $V(\Gamma)$. Decompose $\Gamma$ as a join $\Lambda \ast \Xi$ where $\Lambda$ is the subgraph generated by the vertices which are adjacent to all the vertices of $\Gamma$. For  $\mathrm{Aut}(\Gamma \mathcal{G})$ the following statements are equivalent:
\begin{enumerate} 
\item[(i)] The subgraph $\Xi$ contains a vertex which is not labelled by $\mathbb{Z}_2$ or $\Xi$ does not decompose as the join of a complete graph with copies of the disjoint union of two single vertices.
\item[(ii)] The automorphism group $\mathrm{Aut}(\Gamma \mathcal{G})$ involves all finite groups. 
\item[(iii)] The automorphism group $\mathrm{Aut}(\Gamma \mathcal{G})$ is SQ-universal.
\item[(iv)] The automorphism group $\mathrm{Aut}(\Gamma \mathcal{G})$ virtually has many quasimorphisms.
\item[(v)] The automorphism group $\mathrm{Aut}(\Gamma \mathcal{G})$ is not boundedly generated.
\item[(vi)] The group $\Gamma \mathcal{G}$ is not virtually free abelian.
\end{enumerate} 
\end{propE}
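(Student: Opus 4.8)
The plan is to deduce everything from Theorems~B, C, D together with the fact, recorded in the excerpt, that when the vertex-groups are finite the subgroup $\mathrm{ConjAut}(\Gamma\mathcal{G})$ has finite index in $\mathrm{Aut}(\Gamma\mathcal{G})$ (from \cite{Genevois}). The first observation is that, for a finite group $G_u$, the quotient $G_u/Z(G_u)$ is finite; and a finite group is never SQ-universal, never virtually has many quasimorphisms (a homogeneous quasimorphism vanishes on torsion, so $\mathrm{QH}$ of a finite group is trivial), and is always boundedly generated. Hence the clauses ``$\Lambda$ contains a vertex $u$ with $G_u/Z(G_u)$ SQ-universal'' and ``\ldots\ virtually has many quasimorphisms'' are vacuous, while ``$G_u/Z(G_u)$ is boundedly generated for every $u\in\Lambda$'' is automatic. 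Theorems~B and~C then say that $\mathrm{ConjAut}(\Gamma\mathcal{G})$ is SQ-universal, respectively virtually has many quasimorphisms, precisely when (i) holds, and Theorem~D says it is boundedly generated precisely when (i) fails. Since SQ-universality, virtually having many quasimorphisms, and bounded generation are all invariant under commensurability, and $\mathrm{ConjAut}(\Gamma\mathcal{G})$ sits with finite index in $\mathrm{Aut}(\Gamma\mathcal{G})$, the corresponding statements for $\mathrm{Aut}(\Gamma\mathcal{G})$ follow. This establishes the equivalences $(\mathrm{i})\Leftrightarrow(\mathrm{iii})\Leftrightarrow(\mathrm{iv})\Leftrightarrow(\mathrm{v})$.

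Next I would fold in (ii) and (vi). The implication $(\mathrm{iii})\Rightarrow(\mathrm{ii})$ is immediate: an SQ-universal group has every countable group, in particular every finite group, as a subgroup of a quotient, hence involves all finite groups. For $(\mathrm{i})\Leftrightarrow(\mathrm{vi})$ I would argue graph-theoretically. Because $\Lambda$ is a clique and the vertex-groups are finite, $\Lambda\mathcal{G}$ is a finite group $F$ and $\Gamma\mathcal{G}\cong F\times(\Xi\mathcal{G})$. The negation of (i) means every vertex of $\Xi$ carries $\mathbb{Z}_2$ and $\Xi$ is a join of copies of $\overline{K_2}$ (two non-adjacent vertices), in which case $\Xi\mathcal{G}\cong D_\infty^k$ and $\Gamma\mathcal{G}$ is virtually $\mathbb{Z}^k$, i.e.\ $\neg(\mathrm{vi})$. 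Here one uses that $\Xi$ has no vertex adjacent to all of $\Xi$ (otherwise it would be adjacent to all of $\Gamma$ and lie in $\Lambda$), so the clique part of any join-decomposition of $\Xi$ is empty. Conversely, if (i) holds I would produce a non-abelian free subgroup of $\Gamma\mathcal{G}$: either some $\Xi$-vertex $v$ has $|G_v|\geq 3$, and since $v$ has a non-neighbour $w$ the subgroup $G_v\ast G_w$ contains $F_2$; or all $\Xi$-labels are $\mathbb{Z}_2$ but $\Xi$ is not a join of copies of $\overline{K_2}$, whence the right-angled Coxeter group $\Xi\mathcal{G}$ contains $F_2$ by the classification of virtually abelian right-angled Coxeter groups. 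In either case $\Gamma\mathcal{G}$ is not virtually free abelian, giving (vi).

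It remains to close the loop, and for this I would prove $\neg(\mathrm{vi})\Rightarrow\neg(\mathrm{ii})$. When $\Gamma\mathcal{G}\cong F\times D_\infty^k$, the finite normal subgroups of $D_\infty^k$ are trivial, so $F$ is the maximal finite normal subgroup of $\Gamma\mathcal{G}$ and is therefore characteristic; moreover $Z(\Gamma\mathcal{G})=Z(F)$ and $(\Gamma\mathcal{G})^{\mathrm{ab}}$ are finite. Using this, together with the identification $\mathrm{Aut}(D_\infty^k)\cong\mathrm{Aut}(D_\infty)\wr S_k$ (valid since $D_\infty$ is centreless and directly indecomposable, and $\mathrm{Aut}(D_\infty)$ is virtually cyclic), a short computation shows that $\mathrm{Aut}(\Gamma\mathcal{G})$ is virtually abelian. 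A virtually abelian group cannot involve all finite groups: if $A\cong\mathbb{Z}^n$ has finite index $c$, then any non-abelian finite simple group of order exceeding $c$ fails to be a subquotient, since a finite subquotient is (finite abelian)-by-(order $\leq c$). Hence $\neg(\mathrm{ii})$. Combining this with $(\mathrm{iii})\Rightarrow(\mathrm{ii})$ and $(\mathrm{vi})\Leftrightarrow(\mathrm{i})\Leftrightarrow(\mathrm{iii})$ places (ii) in the equivalence class, and all six statements coincide.

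I expect the main obstacle to be the final structural step: correctly computing $\mathrm{Aut}(\Gamma\mathcal{G})$ in the degenerate case $\Gamma\mathcal{G}\cong F\times D_\infty^k$ and invoking the precise classification of when a graph product of finite groups is virtually free abelian. The reductions of Theorems~B, C, D to condition~(i) and the commensurability transfer are routine, and $(\mathrm{iii})\Rightarrow(\mathrm{ii})$ is formal; the content lies in showing that $\mathrm{Aut}$ collapses to a virtually abelian group exactly when $\Gamma\mathcal{G}$ is virtually free abelian, which is what separates (ii) and (vi) from the vastness properties on the one side and from the bounded-generation regime on the other.
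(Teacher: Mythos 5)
Most of your plan is sound and close to the paper's: the equivalences $(\mathrm{i})\Leftrightarrow(\mathrm{iii})\Leftrightarrow(\mathrm{iv})\Leftrightarrow(\mathrm{v})$ via Theorems B, C, D, the vacuousness of the $G_u/Z(G_u)$ clauses for finite vertex-groups, and the graph-theoretic argument for $(\mathrm{i})\Leftrightarrow(\mathrm{vi})$ all match what the authors do. But there is a genuine gap in how you bring (ii) into the equivalence class. Your claimed implication $(\mathrm{iii})\Rightarrow(\mathrm{ii})$ --- ``an SQ-universal group has every finite group as a subgroup of a quotient, hence involves all finite groups'' --- conflates two different notions. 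The paper's definition (following Guirardel--Sale) requires every finite group to arise as a quotient of a \emph{finite-index} subgroup, whereas SQ-universality only gives you an embedding into a quotient $G/N$, whose preimage in $G$ need not have finite index. The implication is false in general: Higman's group is SQ-universal but has no non-trivial finite quotients and no proper finite-index subgroups, so it involves no non-trivial finite group at all. Since your argument for (ii) in the positive direction rests entirely on this step, you are left only with $(\mathrm{ii})\Rightarrow(\mathrm{vi})$ (via your $\neg(\mathrm{vi})\Rightarrow\neg(\mathrm{ii})$ computation), and (ii) is not placed in the equivalence class.

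The paper avoids this exact pitfall by treating ``involves all finite groups'' as another instance of the abstract property $\mathcal{P}$: it checks axioms (ii)--(v) using \cite[Corollary 1.6]{Vastness}, and replaces axiom (i) by the weakened version of Remark \ref{remark}, verified directly in the finite-vertex-group case because $\mathrm{ConjAut}(A\ast B)$ contains the normal subgroup $\mathrm{Inn}(A\ast B)\simeq A\ast B$, which is virtually free of rank $\geq 2$, so \cite[Lemma 1.5]{Vastness} applies. (Equivalently: in the relevant cases the group is \emph{large}, and largeness --- unlike acylindrical hyperbolicity or SQ-universality --- does imply involving all finite groups.) Your $\neg(\mathrm{vi})\Rightarrow\neg(\mathrm{ii})$ step, via the structure of $\mathrm{Aut}(F\times D_\infty^k)$, is a workable alternative to the paper's negative direction, though it asks for more computation than the paper's appeal to Theorem F; but the positive direction needs to be repaired along the lines above.
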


In this statement, following \cite{Vastness}, we say that a group $G$ \emph{involves all finite groups} if any finite group appears as a quotient of a finite-index subgroup of $G$.

\section*{Proof of the main theorem}

\noindent
Let $\Gamma$ be a simplicial graph and $\mathcal{G}$ a collection of groups indexed by $V(\Gamma)$. Assume that $\Gamma$ is not complete.

\medskip \noindent
Fix two vertices $u,v \in V(\Gamma)$ which are not adjacent. Notice that, as $\mathrm{PConjAut}(\Gamma \mathcal{G})$ sends each vertex-group to a conjugate of itself, the normal closure $Q$ in $\Gamma \mathcal{G}$ of the subgroup $\langle G_w, \ w \in V(\Gamma) \backslash \{u,v\} \rangle$ is preserved by $\mathrm{PConjAut}(\Gamma \mathcal{G})$. Moreover, the inclusion $\langle G_u,G_v \rangle \hookrightarrow \Gamma \mathcal{G}$ induces an isomorphism $\langle G_u,G_v \rangle \to \Gamma \mathcal{G}/Q$, and the subgroup $\langle G_u, G_v \rangle$ is naturally isomorphic to the free product $G_u \ast G_v$. As a consequence, the quotient $\Gamma \mathcal{G} \to \Gamma \mathcal{G}/Q$ induces a homomorphism $\Phi : \mathrm{PConjAut}(\Gamma \mathcal{G}) \to \mathrm{PConjAut}(G_u \ast G_v)$. Notice that $\Phi( \mathrm{Inn}(\Gamma \mathcal{G})) = \mathrm{Inn}(G_u \ast G_v)$, so that $\mathrm{Inn}(G_u \ast G_v)$ is included into the image of $\Phi$.

\medskip \noindent
The fact that $\mathrm{PConjAut}(\Gamma \mathcal{G})$ acts fixed-point freely on a simplicial tree now follows from the following observation:

\begin{claim}
Let $A,B$ be two non-trivial groups. If $T$ denotes the Bass-Serre tree of the free product $A \ast B$, then
$$\psi \mapsto \left( v \mapsto \text{vertex whose stabiliser is $\psi(\mathrm{stab}(v))$} \right)$$
defines an isometric action $\mathrm{PConjAut}(A \ast B) \curvearrowright T$ which extends $A \ast B \curvearrowright T$ when $A \ast B$ is canonically identified to $\mathrm{Inn}(A \ast B)$. 
\end{claim}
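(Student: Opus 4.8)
The plan is to verify directly that the formula yields a well-defined map on vertices, that this map is a graph automorphism, and that the assignment $\psi \mapsto f_\psi$ is a homomorphism extending the inner action. Recall that the vertex-set of $T$ is $(A \ast B)/A \sqcup (A \ast B)/B$, that the stabiliser of $gA$ (resp. $gB$) is the conjugate $gAg^{-1}$ (resp. $gBg^{-1}$), and that all edge-stabilisers are trivial. The first observation I would record is that the correspondence between a vertex and its stabiliser is a bijection: a non-trivial subgroup fixing two distinct vertices would fix the edge-path between them, which is impossible since edge-stabilisers are trivial; hence every non-trivial vertex-stabiliser fixes a unique vertex, and in particular the normalisers satisfy $N(A) = A$ and $N(B) = B$. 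Since $A$ and $B$ are non-trivial, all vertex-stabilisers are non-trivial. Now for $\psi \in \mathrm{PConjAut}(A \ast B)$ the image $\psi(\mathrm{stab}(v))$ is again a conjugate of $A$ or of $B$ (because $\psi$ sends each factor, hence each of its conjugates, to a conjugate of the same factor), so it is a vertex-stabiliser and determines a unique vertex $f_\psi(v)$; applying the same to $\psi^{-1}$ shows that $f_\psi$ is a bijection of $V(T)$.

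Next I would reduce the isometry statement to a single adjacency. A direct computation gives the twisted equivariance $f_\psi(\gamma \cdot v) = \psi(\gamma) \cdot f_\psi(v)$ for every $\gamma \in A \ast B \simeq \mathrm{Inn}(A \ast B)$ and every vertex $v$, and shows that $\psi \mapsto f_\psi$ is a homomorphism with $f_{\mathrm{conj}_c}(v) = c \cdot v$, so that it extends $A \ast B \curvearrowright T$. Because $A \ast B$ acts transitively on the edges of $T$, every edge has the form $\gamma \cdot e_0$ where $e_0 = \{A, B\}$ is the base edge; by the equivariance it therefore suffices to prove that $f_\psi(e_0)$ is again an edge, that is, that the vertices carrying the stabilisers $\psi(A)$ and $\psi(B)$ are adjacent. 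Writing $\psi(A) = gAg^{-1}$ and $\psi(B) = hBh^{-1}$, these two vertices are $gA$ and $hB$, and adjacency amounts to $gA \cap hB \neq \emptyset$, equivalently to the existence of a common conjugator $x$ with $\psi(A) = xAx^{-1}$ and $\psi(B) = xBx^{-1}$.

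This last point is the heart of the matter, and it is exactly where I would use that $\psi$ is an automorphism rather than a mere endomorphism respecting the factors: surjectivity gives $\langle \psi(A), \psi(B) \rangle = \psi(\langle A, B \rangle) = A \ast B$. It then remains to prove the following purely tree-theoretic fact: if two conjugates $gAg^{-1}$ and $hBh^{-1}$ generate the whole group $A \ast B$, then their fixed vertices $gA$ and $hB$ are adjacent. I would argue by contradiction. The two vertices have different types, so non-adjacency means $d(gA, hB) \geq 3$; by Bass--Serre theory (ping-pong along the connecting geodesic) the subgroup $H = \langle gAg^{-1}, hBh^{-1} \rangle$ is then the free product of the two point-stabilisers, with minimal invariant subtree $H \cdot [gA, hB]$ and quotient graph of groups the segment $[gA, hB]$, a path of length $\geq 3$ with trivial edge-groups. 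But $A \ast B$ acts minimally on $T$ with a single orbit of edges, so if $H = A \ast B$ its minimal subtree is all of $T$ and the quotient is a single edge --- contradicting that the quotient is a segment of length $\geq 3$. Hence $gA$ and $hB$ are adjacent, $f_\psi(e_0)$ is an edge, and $f_\psi$ is a graph automorphism, i.e. an isometry of $T$. The homomorphism property and the compatibility with the inner action being routine, this completes the argument; the main obstacle is precisely the tree-theoretic lemma above, whose failure for non-surjective partial conjugations is what explains why the hypothesis that $\psi$ be an automorphism cannot be dropped.
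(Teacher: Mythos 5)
Your argument is correct and follows essentially the same route as the paper's sketch: the well-definedness of the vertex map comes from the bijection between vertices and their (non-trivial) stabilisers, and the preservation of adjacency comes from the purely algebraic characterisation that two vertices $u,v$ are adjacent if and only if $\langle \mathrm{stab}(u), \mathrm{stab}(v) \rangle = A \ast B$, proved via the fact that $[u,v]$ is a fundamental domain for this subgroup while $A \ast B$ acts with a single orbit of edges. You simply supply more detail (the reduction to the base edge and the ping-pong contradiction) where the paper cites the fundamental-domain fact directly.
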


\noindent
We refer to  \cite[\S 4, Thm. 7]{Serre} for more information on Bass-Serre trees. A proof of the previous claim can be found in  \cite{Karrass} and in \cite[Thm. 1]{Pettet} in greater generality. We include here the sketch of a direct proof for reader's convenience.

\begin{proof}[Sketch of proof of the claim.]
The vertices of the Bass-Serre tree $T$ are the cosets of $A$ and $B$. Because $A$ and $B$ are malnormal subgroups of $A \ast B$ (ie., for every $g \in A \ast B$, the intersection $A \cap gBg^{-1}$ is always trivial and the intersection $A \cap gAg^{-1}$ is non-trivial only when $g \in A$), there is a natural bijection between the cosets of $A$ and $B$ and their conjugates. By definition of $\mathrm{PConjAut}(\Gamma \mathcal{G})$, it follows that 
$$\psi \mapsto \left( v \mapsto \text{vertex whose stabiliser is $\psi(\mathrm{stab}(v))$} \right)$$
defines an action of $\mathrm{PConjAut}(\Gamma \mathcal{G}))$ on the vertices of $T$. It remains to show that the action preserves the adjacency relation of the vertices of $T$.

\medskip \noindent
Notice that, if $u,v \in T$ are two vertices, then the geodesic $[u,v]$ is a fundamental domain for the action of the subgroup $\langle \mathrm{stab}(u) , \mathrm{stab}(v) \rangle \leq A \ast B$ on $T$. Therefore, since $A \ast B$ acts on $T$ with a single orbit of edges, the two vertices $u$ and $v$ turn out to be adjacent if and only if  $\langle \mathrm{stab}(u) , \mathrm{stab}(v) \rangle = A \ast B$. Because this characterisation is purely algebraic, it must be preserved by automorphisms of $A \ast B$, concluding the proof. 
\end{proof}

\noindent
Thus, we have proved the first assertion of our theorem. The second assertion now follows from the following observation:

\begin{claim}
Let $A,B$ be two finitely generated groups. Then the action of $\mathrm{PConjAut}(A \ast B)$ on the Bass-Serre tree $T$ of $A \ast B$ contains a WPD isometry.
\end{claim}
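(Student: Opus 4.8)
The plan is to produce the required WPD isometry from inside $\mathrm{Inn}(A\ast B)$ and to verify the condition by directly computing the pointwise stabiliser of a suitable segment of its axis. Write $G:=\mathrm{PConjAut}(A\ast B)$, let $v_A,v_B$ denote the base vertices of $T$ (the cosets $A$ and $B$), and recall from the first claim that the stabiliser of a type-$A$ vertex $gA$ is $\{\psi\in G:\psi(gAg^{-1})=gAg^{-1}\}$. In particular $\mathrm{stab}_G(v_A)\cap\mathrm{stab}_G(v_B)$ is exactly the set of automorphisms fixing both factors setwise, i.e. it is naturally $\mathrm{Aut}(A)\times\mathrm{Aut}(B)$. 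This is the crux of the difficulty: vertex and edge stabilisers in $G$ are \emph{large}, containing full copies of $\mathrm{Aut}(A)$ and $\mathrm{Aut}(B)$, so a carelessly chosen loxodromic element need not be WPD. For instance, if $\phi\in\mathrm{Aut}(A)$ is extended by the identity on $B$ and $\phi$ fixes some $a\in A$, then $\phi$ fixes the entire axis of $\mathrm{Inn}(ab)$; hence when $\mathrm{Aut}(A)$ is infinite (say $A=\mathbb{Z}^n$ with $n\ge 2$) the naive candidate $\mathrm{Inn}(ab)$ fails to be WPD. Defeating these large stabilisers is exactly where finite generation must be used.

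The key step is thus the choice of the loxodromic element. Using that $A$ and $B$ are finitely generated, I would fix generating sets $\{a_1,\dots,a_k\}$ of $A$ and $\{b_1,\dots,b_k\}$ of $B$ consisting of non-trivial elements (taking $k$ to be the larger of the two minimal sizes and repeating generators if necessary to equalise the lengths), and set $c:=b_1a_1b_2a_2\cdots b_ka_k$. This word is cyclically reduced of length $2k\ge 2$, so by the first claim $\mathrm{Inn}(c)$ acts loxodromically on $T$; moreover, since $c$ begins with a $B$-syllable and ends with an $A$-syllable, the geodesic $[v_A,cv_A]$ starts across the edge $[v_A,v_B]$, so this edge lies on the axis of $c$. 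I would take as witnessing vertices $u:=v_A$ and $v:=cv_B$, both on the axis, and I expect the hard bookkeeping to be in showing $\mathrm{stab}_G(u)\cap\mathrm{stab}_G(v)$ is finite.

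To finish, I would compute the pointwise stabiliser of the geodesic $[v_A,cv_B]$, which is $\mathrm{stab}_G(u)\cap\mathrm{stab}_G(v)$ since fixing the two endpoints of a geodesic in a tree fixes it pointwise. Fixing $v_A$ and $v_B$ forces any such $\psi$ to preserve both factors setwise, so $\psi=\alpha\ast\beta$ acts syllable-wise with $\alpha\in\mathrm{Aut}(A)$ and $\beta\in\mathrm{Aut}(B)$ (no residual conjugator survives once $v_A$ and $v_B$ are both fixed). Running along the intermediate vertices $b_1A,\ b_1a_1B,\ b_1a_1b_2A,\dots,cv_A,\ cv_B$ of the geodesic then imposes one relation at each step: fixing $b_1A$ gives $b_1^{-1}\beta(b_1)\in A$, hence $b_1^{-1}\beta(b_1)\in A\cap B=\{1\}$ and $\beta(b_1)=b_1$; fixing $b_1a_1B$ next gives $a_1^{-1}\alpha(a_1)\in A\cap B=\{1\}$ and $\alpha(a_1)=a_1$; continuing, the type-$A$ vertex at distance $2j$ forces $\beta(b_j)=b_j$ and the type-$B$ vertex at distance $2j+1$ forces $\alpha(a_j)=a_j$, the final vertex $cv_B$ yielding $\alpha(a_k)=a_k$. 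Since the $a_j$ generate $A$ and the $b_j$ generate $B$, this forces $\alpha=\mathrm{id}$ and $\beta=\mathrm{id}$, so $\psi=\mathrm{id}$ and the intersection is trivial. Hence $\mathrm{Inn}(c)$ is a WPD isometry for $G\curvearrowright T$; and because $\mathrm{Inn}(c)\in\mathrm{Inn}(A\ast B)$, which lies in the image of $\Phi$, this element also witnesses a WPD isometry for the quotient $Q$ appearing in the main theorem.
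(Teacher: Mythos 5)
Your proof is correct and follows essentially the same route as the paper: both take the inner automorphism of an alternating product of full (nontrivial) generating sets of $A$ and $B$ and show that the stabiliser of a suitable segment of its axis is trivial, using malnormality of the factors. The only cosmetic difference is that the paper fixes the four vertices $A$, $B$, $gA$, $gB$ and compares normal forms of $g$ and $\varphi(g)$, whereas you fix the two endpoints of $[v_A,cv_B]$ and extract the relations $\alpha(a_j)=a_j$, $\beta(b_j)=b_j$ vertex by vertex along the fixed geodesic.
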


\noindent
Fix two generating sets $\{s_1, \ldots, s_n\}$ and $\{r_1, \ldots, r_n\}$ of $A$ and $B$ respectively, such that $r_i$ and $s_i$ are non-trivial for every $1 \leq i \leq n$. Set
$$g := s_1r_1 \cdot s_2r_2 \cdots s_nr_n \in A \ast B.$$
We claim that the inner automorphism $\iota(g) \in \mathrm{PConjAut}(A \ast B)$ defines a WPD isometry of $T$. First of all, notice that, if $\sigma \subset T$ denotes the segment
$$A, \ s_1\cdot B, \ s_1r_1 \cdot A, \ s_1r_1s_2 \cdot B, \ s_1r_1s_2r_2 \cdot A, \ldots,  \ s_1r_1 \cdots s_{n-1}r_{n-1}s_{n}r_n \cdot A=g \cdot A$$ 
then $\gamma:= \bigcup\limits_{k \in \mathbb{Z}} g^k \sigma$ defines a geodesic on which $g$ acts by translations. Notice also that the penultimate vertex of $\sigma$ is 
$$s_1r_1 \cdots s_{n-1}r_{n-1}s_{n} \cdot B = s_1r_1 \cdots s_{n-1}r_{n-1}s_{n}r_n \cdot B= g \cdot B,$$
so that $B$ belongs to $\gamma$. Therefore, $\gamma$ is an axis of $\iota(h)$ which contains the vertices $A$, $B$, $g \cdot A$ and $g \cdot B$. As a consequence of \cite[Corollary 4.3]{MinasyanOsin}, in order to show that $\iota(h)$ is WPD, it is sufficient to verify that the intersection
$$\mathrm{stab}(A) \cap \mathrm{stab}(B) \cap \mathrm{stab}(g \cdot A) \cap \mathrm{stab}(g \cdot B)$$
is finite, where the stabilisers are taken with respect to the action of $\mathrm{PConjAut}(A \ast B)$. Otherwise saying, we want to show that
$$F:= \{ \varphi \in \mathrm{PConjAut}(A \ast B) \mid \varphi(A)=A, \ \varphi(B)=B, \ \varphi(g \cdot A)= g \cdot A, \ \varphi(g \cdot B)=g \cdot B \}$$
is finite. So let $\varphi \in F$. Because $g \cdot A = \varphi(g \cdot A)=\varphi(g) \cdot A$ and $g \cdot B = \varphi(g \cdot B) = \varphi(g) \cdot B$, we deduce that $g^{-1} \varphi(g) \in A \cap B= \{1\}$, hence $\varphi(g)=g$. Because $\varphi$ stabilises both $A$ and $B$, it follows that
$$s_1r_1 \cdot s_2r_2 \cdots s_nr_n = g = \varphi(g)= \varphi(s_1)\varphi(r_1) \cdot \varphi(s_2) \varphi(r_2) \cdots \varphi(s_n) \varphi(r_n)$$
is an equality between two alternating words, so we must have $\varphi(s_i)=s_i$ and $\varphi(r_i)=r_i$ for every $1 \leq i \leq n$. But $\{s_1, \ldots, s_n\}$ and $\{r_1, \ldots, r_n\}$ generate $A$ and $B$ respectively, so $\varphi$ must be the identity. Thus, we have proved that $F=\{1\}$, which concludes the proof of our claim. 

\begin{remark}
It is worth noticing that the assumption on the finite generation of vertex-groups can be weakened. Indeed, we only need each vertex-group to contain a finite set such that any automorphism fixing pointwise this set has to be the identity. Of course, taking a finite generating set works, but groups which are not finitely generated may also contain such sets. For instance, any automorphism of $\mathbb{Q}$ fixing $\{1\}$ must be the identity.
\end{remark}

\section*{Proofs of Theorems A, B, C and D}

\noindent
We begin by proving Theorem A about Kazhdan's Property (T).

\begin{proof}[Proof of Theorem A]
If $\Gamma$ is not complete, then it follows from our main theorem that $\mathrm{PConjAut}(\Gamma \mathcal{G})$ acts fixed-point freely on a simplicial tree. Because Property (T) is stable under commensurability \cite[Theorem 1.7.1]{Bekka} and that a group acting fixed-point freely on a simplicial tree cannot satisfy Property (T) \cite{Watatani}, we conclude that $\mathrm{ConjAut}(\Gamma \mathcal{G})$ does not satisfy Property~(T).

\medskip \noindent
Conversely, if $\Gamma$ is a complete graph, then 
$$\mathrm{PConjAut}(\Gamma \mathcal{G})= \bigoplus\limits_{u \in V(\Gamma)} \mathrm{Inn}(G_u) \simeq \bigoplus\limits_{u \in V(\Gamma)} G_u/Z(G_u).$$
Because Property (T) is stable under commensurability and because a direct sum satisfies Property (T) if and only if so do its factors \cite[Proposition 1.7.8]{Bekka}, we conclude that $\mathrm{ConjAut}(\Gamma \mathcal{G})$ satisfies Property (T) if and only if so does $G_u/Z(G_u)$ for every $u \in V(\Gamma)$. 
\end{proof}

\noindent
Next, let us turn to the proof of Theorem B and its corollaries. From now on, we fix a group property $\mathcal{P}$ satisfying the following conditions:
\begin{itemize}
	\item[(i)] acylindrically hyperbolic groups satisfy $\mathcal{P}$; 
	\item[(ii)] if a group has a quotient satisfying $\mathcal{P}$, then it satisfies $\mathcal{P}$ as well;
	\item[(iii)] a group satisfies $\mathcal{P}$ if and only if so do its finite-index subgroups;
	\item[(iv)] the direct sum of two groups satisfies $\mathcal{P}$ if and only if so does one of the two factors;
	\item[(v)] free abelian groups do not satisfy $\mathcal{P}$.
\end{itemize}
We refer to \cite{OsinAcyl} for more information on acylindrically hyperbolic groups. The only thing we need to know that, if a group which is not virtually cyclic acts on a simplicial tree with a WPD isometry, then it has to be acylindrically hyperbolic.

\medskip \noindent
Our goal now is to show the following statement:

\begin{thmF}
Let $\Gamma$ be a finite simplicial graph and $\mathcal{G}$ a collection of finitely generated groups indexed by $V(\Gamma)$. Decompose $\Gamma$ as a join $\Lambda \ast \Xi$ where $\Lambda$ is the subgraph generated by the vertices which are adjacent to all the vertices of $\Gamma$. Then $\mathrm{ConjAut}(\Gamma \mathcal{G})$ satisfies $\mathcal{P}$ if and only if $\Xi$ contains a vertex which is not labelled by $\mathbb{Z}_2$, or $\Lambda$ contains a vertex $u$ such that $G_u/Z(G_u)$ satisfies $\mathcal{P}$, or $\Xi$ does not decompose as the join of a complete graph with copies of the disjoint union of two single vertices.
\end{thmF}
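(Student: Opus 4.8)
The plan is to reduce Theorem F, via the join decomposition, to a single statement about $\mathrm{PConjAut}(\Xi \mathcal{G}_\Xi)$, and then to settle the right-angled Coxeter case by hand. Throughout I would work with $\mathrm{PConjAut}$ rather than $\mathrm{ConjAut}$: since $\Gamma$ is finite, the action of $\mathrm{ConjAut}(\Gamma \mathcal{G})$ permuting the vertex-groups has finite image, so $\mathrm{PConjAut}(\Gamma \mathcal{G})$ is finite-index in $\mathrm{ConjAut}(\Gamma \mathcal{G})$ and by~(iii) the two satisfy $\mathcal{P}$ simultaneously. The structural input I would establish first is that, for any join $\Delta = \Delta_1 \ast \cdots \ast \Delta_k$, restriction induces an isomorphism $\mathrm{PConjAut}(\Delta \mathcal{G}) \cong \prod_i \mathrm{PConjAut}(\Delta_i \mathcal{G})$: a factor $G_w$ with $w \in \Delta_i$ commutes with every $\Delta_j \mathcal{G}$ for $j \neq i$, so all its conjugates lie in the direct factor $\Delta_i \mathcal{G}$, and each automorphism preserves $\Delta_i \mathcal{G}$ and restricts there to a partial conjugating automorphism. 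Applying this to $\Gamma = \Lambda \ast \Xi$, with $\Lambda$ complete (hence a join of its vertices, and $\mathrm{PConjAut}(G_u) = \mathrm{Inn}(G_u) \cong G_u/Z(G_u)$), gives
$$\mathrm{PConjAut}(\Gamma \mathcal{G}) \cong \Big( \bigoplus_{u \in \Lambda} G_u/Z(G_u) \Big) \times \mathrm{PConjAut}(\Xi \mathcal{G}_\Xi).$$
By~(iv) this satisfies $\mathcal{P}$ if and only if some $G_u/Z(G_u)$ ($u \in \Lambda$) does, or $\mathrm{PConjAut}(\Xi \mathcal{G}_\Xi)$ does. The first alternative is exactly the middle clause of the theorem, so it remains to prove that $\mathrm{PConjAut}(\Xi \mathcal{G}_\Xi)$ satisfies $\mathcal{P}$ if and only if $\Xi$ has a vertex not labelled by $\mathbb{Z}_2$, or $\Xi$ is not a join of copies of $2K_1$ (the two-vertex graph with no edge). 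Here I use that $\Xi$ has no universal vertex, since such a vertex would be universal in $\Gamma$; consequently the complete factor in the decomposition of the theorem is forced to be empty.

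For the first alternative, suppose $\Xi$ has a non-$\mathbb{Z}_2$ vertex $w$ and pick a non-neighbour $w'$ (one exists, as $\Xi$ has no universal vertex). Applying the Main Theorem to $\Xi \mathcal{G}_\Xi$ with the non-adjacent pair $\{w, w'\}$ yields a quotient $\bar{Q}$ of $\mathrm{PConjAut}(\Xi \mathcal{G}_\Xi)$ acting on the Bass--Serre tree of $G_w \ast G_{w'}$ with a WPD isometry and containing $\mathrm{Inn}(G_w \ast G_{w'})$. Since $G_w \neq \mathbb{Z}_2$, the free product $G_w \ast G_{w'}$ contains a non-abelian free subgroup, so $\bar{Q}$ is not virtually cyclic; by the quoted criterion it is acylindrically hyperbolic, hence satisfies $\mathcal{P}$ by~(i), and therefore so does $\mathrm{PConjAut}(\Xi \mathcal{G}_\Xi)$ by~(ii).

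It then remains to treat the case where every vertex of $\Xi$ is labelled by $\mathbb{Z}_2$, so that $\Xi \mathcal{G}_\Xi$ is a right-angled Coxeter group. If $\Xi = (2K_1)^{\ast m}$ ($m$-fold join), then $\Xi \mathcal{G}_\Xi \cong (\mathbb{Z}_2 \ast \mathbb{Z}_2)^m$; applying the join isomorphism above factor by factor, and noting via the first claim in the proof of the Main Theorem that $\mathrm{PConjAut}(\mathbb{Z}_2 \ast \mathbb{Z}_2)$ embeds in the isometry group of the Bass--Serre line and is therefore virtually cyclic, I would conclude that $\mathrm{PConjAut}(\Xi \mathcal{G}_\Xi)$ is virtually free abelian and so fails $\mathcal{P}$ by~(v) and~(iii); this is the ``only if'' direction. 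If instead $\Xi \neq (2K_1)^{\ast m}$, then, $\Xi$ having no universal vertex, some vertex $w$ has at least two non-neighbours $x, y$. Killing the normal closure of all factors other than $G_w, G_x, G_y$ -- a $\mathrm{PConjAut}$-invariant subgroup -- realises the induced three-vertex graph product $H$ as a quotient and yields a surjection of $\mathrm{PConjAut}(\Xi \mathcal{G}_\Xi)$ onto a subgroup of $\mathrm{PConjAut}(H)$ containing $\mathrm{Inn}(H)$. As $w$ is adjacent to neither $x$ nor $y$, the group $H$ is $\mathbb{Z}_2 \ast (\mathbb{Z}_2 \times \mathbb{Z}_2)$ or $\mathbb{Z}_2 \ast \mathbb{Z}_2 \ast \mathbb{Z}_2$, in either case not virtually cyclic; the two claims of the Main Theorem's proof (the latter group requiring their finitely-many-factors analogue, for which the WPD construction extends verbatim and which is available through \cite{Pettet}) then provide a WPD isometry lying in $\mathrm{Inn}(H)$, so this image is acylindrically hyperbolic. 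Hence $\mathrm{PConjAut}(\Xi \mathcal{G}_\Xi)$ satisfies $\mathcal{P}$ by~(i) and~(ii), which completes the ``if'' direction.

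The main obstacle is precisely this all-$\mathbb{Z}_2$ regime, where the shape of $\Xi$ alone decides the answer. On the ``only if'' side, the delicate point is to check that $\mathrm{PConjAut}\big((\mathbb{Z}_2 \ast \mathbb{Z}_2)^m\big)$ is genuinely virtually free abelian: a priori the partial conjugations by the various involutions might generate more than the translation subgroups of the $m$ lines, and one must verify that the join isomorphism really confines them. On the ``if'' side, the difficulty is that, since all factors have order two, the two-vertex reduction of the Main Theorem only ever produces the infinite dihedral group $\mathbb{Z}_2 \ast \mathbb{Z}_2$, which is virtually cyclic and hence useless; the remedy is the three-vertex reduction above, which is exactly why one is forced to pass to the finitely-many-factors version of the WPD construction rather than the two-factor statement proved in the text.
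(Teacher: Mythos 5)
Your argument is correct in outline and shares the paper's overall architecture: pass to $\mathrm{PConjAut}$ via condition (iii), split off $\Lambda$ as $\bigoplus_{u \in V(\Lambda)} G_u/Z(G_u)$ using the direct-sum decomposition along the join, dispatch the case of a non-$\mathbb{Z}_2$ vertex of $\Xi$ via the Main Theorem and conditions (i)--(iii), and isolate the all-$\mathbb{Z}_2$ (right-angled Coxeter) case as the real content. Where you genuinely diverge is in that last case. The paper delegates it to Proposition \ref{prop:RACG}, whose proof rests on two external inputs: the Sale--Susse theorem \cite{Sale} that $\mathrm{Out}(C_\Gamma)$ is large when $\Gamma$ has a SIL, and the Gutierrez--Piggott--Ruane theorem \cite{GPR} that $\mathrm{Out}(C_\Gamma)$ is finite otherwise, after which everything reduces to largeness of $C_\Gamma$ itself via Lemma \ref{lem:RACGSQ}. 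You instead argue by hand: the join isomorphism confines $\mathrm{PConjAut}\bigl((\mathbb{Z}_2 \ast \mathbb{Z}_2)^{\times m}\bigr)$ to a product of copies of $\mathrm{PConjAut}(\mathbb{D}_\infty)$, each virtually cyclic by faithfulness of the action on the Bass--Serre line, giving the negative direction; and when $\Xi$ is not a join of copies of the two-vertex edgeless graph you find a vertex with two non-neighbours and run the Main Theorem's construction on a three-vertex quotient. This buys self-containedness (no SILs, no \cite{Sale}, no \cite{GPR}) at the cost of having to extend the paper's two claims beyond two free factors.

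That cost is where your write-up is thinnest, though I believe both points are fillable. For $H = \mathbb{Z}_2 \ast \mathbb{Z}_2 \ast \mathbb{Z}_2$ you need the action on the Bass--Serre tree of a three-factor free product together with the WPD computation for $\iota(srt)$; the action is indeed covered by \cite{Pettet} and \cite{Karrass}, and the stabiliser computation does go through (fixing $A$, $sB$, $srC$ and $gA$ forces $\varphi(s)=s$, $\varphi(r)=r$, $\varphi(t)=t$), but ``extends verbatim'' oversells it -- the segment, the axis, and the application of \cite[Corollary 4.3]{MinasyanOsin} all need to be redone for the new tree. More importantly, for $H = \mathbb{Z}_2 \ast (\mathbb{Z}_2 \times \mathbb{Z}_2)$ you silently assume that the image of $\mathrm{PConjAut}(\Xi\mathcal{G}_\Xi)$ in $\mathrm{Aut}(H)$ lies in $\mathrm{PConjAut}(A \ast B)$ with $B = G_x \times G_y$: a priori an element only sends $G_x$ and $G_y$ to conjugates by possibly \emph{different} elements, so it is not immediate that $B$ goes to a conjugate of $B$. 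This does hold -- $\varphi(B)$ is a finite subgroup of $A \ast B$, hence conjugate into a free factor by the Kurosh subgroup theorem, and only $B$ has the right order -- but the step must be said, since without it the two-factor claims of the Main Theorem simply do not apply to your acting group.
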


\noindent
We begin by considering the case of right-angled Coxeter groups. Namely:

\begin{prop}\label{prop:RACG}
Let $\Gamma$ be a finite simplicial graph. Then $\mathrm{Aut}(C_\Gamma)$ satisfies $\mathcal{P}$ if and only if $\Gamma$ does not decompose as the join of a complete graph with copies of the disjoint union of two single vertices.
\end{prop}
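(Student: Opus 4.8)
The plan is to read off the condition from the direct-product decomposition of $C_\Gamma$ induced by the maximal join decomposition of $\Gamma$. Write $\Gamma = \Lambda \ast \Gamma_1 \ast \cdots \ast \Gamma_s$, where $\Lambda$ is the clique of vertices adjacent to all others and each $\Gamma_i$ is a join-indecomposable subgraph with at least two vertices; then $C_\Gamma = \mathbb{Z}_2^{|\Lambda|} \times C_{\Gamma_1} \times \cdots \times C_{\Gamma_s}$. The elementary input is that, for $\Gamma_i$ join-indecomposable with at least two vertices, $C_{\Gamma_i}$ is acylindrically hyperbolic unless $\Gamma_i$ is a single non-edge $\overline{K_2}$, in which case $C_{\Gamma_i}$ is the infinite dihedral group $D_\infty$. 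Thus $\Gamma$ decomposes as the join of a complete graph with copies of $\overline{K_2}$ exactly when every factor $C_{\Gamma_i}$ is infinite dihedral, and the task becomes showing that $\mathrm{Aut}(C_\Gamma)$ satisfies $\mathcal{P}$ if and only if at least one factor $C_{\Gamma_i}$ is acylindrically hyperbolic.

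The first step is to reduce $\mathrm{Aut}(C_\Gamma)$ to the automorphism groups of the factors. Since no vertex of a join-indecomposable $\Gamma_i$ is adjacent to all others, each $C_{\Gamma_i}$ has trivial center, so $Z(C_\Gamma)=\mathbb{Z}_2^{|\Lambda|}$ is a characteristic direct factor and $C_\Gamma = Z \times Q$ with $Z=\mathbb{Z}_2^{|\Lambda|}$ and $Q=\prod_i C_{\Gamma_i}$. As $Q$ is centerless, $\mathrm{Aut}(Z\times Q)\cong \mathrm{Hom}(Q,Z)\rtimes(\mathrm{Aut}(Z)\times\mathrm{Aut}(Q))$, and because $\mathrm{Hom}(Q,Z)$ and $\mathrm{Aut}(Z)=GL_{|\Lambda|}(\mathbb{F}_2)$ are finite, $\mathrm{Aut}(C_\Gamma)$ is virtually $\mathrm{Aut}(Q)$. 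A Krull--Remak--Schmidt argument for centerless groups shows that $\mathrm{Aut}(Q)$ permutes the directly indecomposable factors $C_{\Gamma_i}$ and is therefore virtually $\prod_i \mathrm{Aut}(C_{\Gamma_i})$. Applying (iii) and (iv) repeatedly, $\mathrm{Aut}(C_\Gamma)$ satisfies $\mathcal{P}$ if and only if $\mathrm{Aut}(C_{\Gamma_i})$ does for some $i$, so everything reduces to a single join-indecomposable factor.

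It then remains to treat one join-indecomposable factor $\Gamma_0$ with at least two vertices. If $\Gamma_0=\overline{K_2}$, then $\mathrm{Aut}(D_\infty)$ is virtually $\mathbb{Z}$ and fails $\mathcal{P}$ by (iii) and (v). If $C_{\Gamma_0}$ is acylindrically hyperbolic I would show $\mathrm{Aut}(C_{\Gamma_0})$ satisfies $\mathcal{P}$, splitting on the independence number $\alpha(\Gamma_0)\geq 2$. When $\alpha(\Gamma_0)\geq 3$, I choose three pairwise non-adjacent vertices and kill the normal closure of the remaining vertex-groups; this normal closure is $\mathrm{PConjAut}$-invariant, so the quotient $C_{\Gamma_0}\twoheadrightarrow \mathbb{Z}_2\ast\mathbb{Z}_2\ast\mathbb{Z}_2$ induces a homomorphism $\mathrm{PConjAut}(C_{\Gamma_0})\to \mathrm{PConjAut}(\mathbb{Z}_2^{\ast 3})$ exactly as in the proof of the Main Theorem. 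Its image contains $\mathrm{Inn}(\mathbb{Z}_2^{\ast 3})$ together with the loxodromic WPD element built in the Main Theorem (here for the three-fold free product, cf. \cite{Karrass,Pettet}); being non-virtually-cyclic and admitting a WPD isometry on the Bass--Serre tree, this image is acylindrically hyperbolic, hence satisfies $\mathcal{P}$ by (i), whence $\mathrm{PConjAut}(C_{\Gamma_0})$ does by (ii), and $\mathrm{Aut}(C_{\Gamma_0})$ does by (iii), using that $\mathrm{PConjAut}$ has finite index in $\mathrm{Aut}$ for graph products of finite groups \cite{Genevois}. When $\alpha(\Gamma_0)=2$, any two non-neighbours of a vertex $v$ are adjacent, so the complement of the closed star of every vertex is a clique; since the pure conjugating automorphism group is generated by partial conjugations, the absence of disconnected stars forces $\mathrm{PConjAut}(C_{\Gamma_0})=\mathrm{Inn}(C_{\Gamma_0})$. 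Again using \cite{Genevois}, $\mathrm{Aut}(C_{\Gamma_0})$ is then virtually $\mathrm{Inn}(C_{\Gamma_0})\cong C_{\Gamma_0}$, so (i) applied to this finite-index acylindrically hyperbolic subgroup together with (iii) yields $\mathcal{P}$.

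Combining the two reductions gives the statement: $\mathrm{Aut}(C_\Gamma)$ satisfies $\mathcal{P}$ exactly when some join factor is acylindrically hyperbolic, that is, exactly when $\Gamma$ is not a join of a complete graph with copies of $\overline{K_2}$. I expect the genuine difficulty to lie in the single-factor case, and specifically in the two structural inputs it requires: that, for $\alpha(\Gamma_0)=2$, the vanishing of disconnected stars really forces $\mathrm{PConjAut}=\mathrm{Inn}$ (resting on partial conjugations generating $\mathrm{PConjAut}$), and that the induced map to the pure conjugating automorphism group of the free-product quotient retains a WPD element --- this being clean only because we collapse onto single vertex-groups rather than onto larger free factors, so that each $\mathbb{Z}_2$ is sent to a conjugate of itself. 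By contrast, the Krull--Remak--Schmidt step and the finiteness of $\mathrm{Hom}(Q,Z)$ and $\mathrm{Aut}(Z)$ are routine once the factors are known to be centerless and directly indecomposable.
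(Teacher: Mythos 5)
Your route is genuinely different from the paper's. The paper does not pass through the join decomposition at all: it uses the SIL dichotomy, namely that a SIL in $\Gamma$ forces $\mathrm{Out}(C_\Gamma)$ to be large by \cite{Sale}, while the absence of a SIL forces $\mathrm{Out}(C_\Gamma)$ to be finite by \cite{GPR}, after which everything reduces to whether $C_{\Gamma\setminus\Lambda}$ is large or a sum of infinite dihedral groups (Lemma~\ref{lem:RACGSQ}). This packages into two citations exactly the structural information you are re-deriving by hand, and in particular it never needs acylindrical hyperbolicity of the factors $C_{\Gamma_i}$, Krull--Remak--Schmidt, or any statement about generation of $\mathrm{PConjAut}$ by partial conjugations. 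Your approach buys a more self-contained reduction to the Main Theorem's tree machinery (and would generalise more readily beyond $\mathbb{Z}_2$ vertex groups), at the cost of importing several results that are individually heavier than the two the paper quotes.

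Those imports are where your write-up is genuinely incomplete rather than merely terse. First, the Krull--Remak--Schmidt step needs each $C_{\Gamma_i}$ to be \emph{directly} indecomposable, not just join-indecomposable; you flag this but do not supply it. It is true (it follows from the fact that finite subgroups of $C_\Gamma$ are abelian, so a centerless RACG has no nontrivial finite direct factor, combined with the fact that an acylindrically hyperbolic group is not a product of two infinite groups --- or from Paris's theorem on irreducible infinite Coxeter groups), but as written the permutation-of-factors claim is unsupported. Second, in the case $\alpha(\Gamma_0)=2$ you rest the entire argument on ``$\mathrm{PConjAut}(C_{\Gamma_0})$ is generated by partial conjugations''; this is a real theorem (for RACGs it goes back to Tits/M\"uhlherr, and a graph-product version is in \cite{Genevois}), not a formal consequence of the definitions, and it needs a citation or proof. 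Third, the WPD claim of the Main Theorem is stated and proved only for two-factor free products; your $\alpha(\Gamma_0)\geq 3$ case needs the analogous statement for $\mathbb{Z}_2\ast\mathbb{Z}_2\ast\mathbb{Z}_2$ (two factors are useless here since $\mathbb{Z}_2 \ast \mathbb{Z}_2 = \mathbb{D}_\infty$), and the adaptation, while routine, is not written. Finally, the opening assertion that every join-indecomposable $C_{\Gamma_i}$ other than $\mathbb{D}_\infty$ is acylindrically hyperbolic is itself a nontrivial input (rank-rigidity for the Davis complex plus the Caprace--Sageev/Sisto criterion). None of these points is wrong, so the strategy does go through, but each is load-bearing and each is currently asserted rather than established.
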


\noindent
The proposition will be essentially deduced from the following observation, which is a consequence of \cite[Proposition 17.2.1]{Davis}:

\begin{lemma}\label{lem:RACGSQ}
Let $\Gamma$ be a finite simplicial graph. The following assertions are equivalent:
\begin{itemize}
	\item the right-angled Coxeter group $C_\Gamma$ is \emph{large} (ie.,  it contains a finite-index subgroup which surjects onto a non-abelian free subgroup); 
	\item $\Gamma$ does not decompose as the join of a complete graph with copies of the disjoint union of two single vertices;
	\item $C_\Gamma$ decomposes as a direct sum of copies of $\mathbb{Z}_2$ and $\mathbb{D}_\infty$.
\end{itemize}
\end{lemma}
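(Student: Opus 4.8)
The plan is to prove the three assertions equivalent through a short cycle of implications, relying on the standard dictionary between $\Gamma$ and $C_\Gamma$: a join $\Gamma = \Gamma_1 \ast \Gamma_2$ yields a direct product $C_\Gamma \cong C_{\Gamma_1} \times C_{\Gamma_2}$, a single vertex contributes a factor $\mathbb{Z}_2$, and a copy of the disjoint union of two single vertices (i.e.\ a $\overline{K_2}$, two non-adjacent vertices) contributes $C_{\overline{K_2}} = \mathbb{Z}_2 \ast \mathbb{Z}_2 = \mathbb{D}_\infty$. Two further facts will be used throughout: for an induced subgraph $\Gamma' \subseteq \Gamma$, killing the generators outside $\Gamma'$ exhibits $C_{\Gamma'}$ as a retract of $C_\Gamma$, so in particular $C_\Gamma$ surjects onto $C_{\Gamma'}$; and largeness is inherited by a group from any of its quotients, since a finite-index subgroup surjecting onto $F_2$ pulls back along the quotient map.

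First I would dispose of the two easy implications. If $\Gamma$ decomposes as the join of a complete graph $K_p$ with $m$ copies of $\overline{K_2}$, the dictionary gives at once $C_\Gamma \cong \mathbb{Z}_2^{\,p} \times \mathbb{D}_\infty^{\,m}$, which is the asserted direct-sum decomposition. Moreover, since $\mathbb{D}_\infty$ contains $\mathbb{Z}$ with index two, the group $\mathbb{Z}_2^{\,p} \times \mathbb{D}_\infty^{\,m}$ is virtually $\mathbb{Z}^m$, hence amenable; as a large group contains a non-abelian free subgroup (the surjection of a finite-index subgroup onto $F_2$ splits, $F_2$ being free), an amenable group cannot be large, so $C_\Gamma$ is not large. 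This settles [join form] $\Rightarrow$ [direct-sum form] $\Rightarrow$ [not large].

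The substance of the lemma is the converse, which I would prove in the form [$\Gamma$ not such a join] $\Rightarrow$ [$C_\Gamma$ large]; combined with the above this closes the cycle and yields all three equivalences. The key reformulation is that $\Gamma = K_p \ast \overline{K_2}^{\,\ast m}$ holds if and only if the complement graph $\overline{\Gamma}$ is a disjoint union of isolated vertices and single edges, i.e.\ has maximal degree at most one. Hence, if $\Gamma$ is not of this form, $\overline{\Gamma}$ carries a vertex $y$ of degree at least two, so $\Gamma$ contains two distinct vertices $x,z$ both non-adjacent to $y$. On the induced subgraph $\{x,y,z\}$ the vertices $x,z$ may or may not be adjacent, so $C_{\Gamma[\{x,y,z\}]}$ is either $\mathbb{Z}_2 \ast \mathbb{Z}_2 \ast \mathbb{Z}_2$ or $(\mathbb{Z}_2 \times \mathbb{Z}_2) \ast \mathbb{Z}_2$. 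Both are non-elementary virtually free groups --- a free product of two or more nontrivial finite groups is virtually free, and is non-elementary unless it equals $\mathbb{Z}_2 \ast \mathbb{Z}_2$ --- hence large; being retracts of $C_\Gamma$, they force $C_\Gamma$ to be large as well.

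The main obstacle is exactly this last implication, and I expect the work to concentrate in the combinatorial translation of the ``join of a complete graph with copies of $\overline{K_2}$'' condition into the clean degree bound on $\overline{\Gamma}$, together with the verification that the two minimal obstructions yield large right-angled Coxeter groups; the latter reduces to the standard fact that a finitely generated virtually free group which is not virtually cyclic is large, an Euler-characteristic count producing an explicit finite-index free subgroup of rank at least two. This argument is self-contained, but the underlying dichotomy --- a right-angled Coxeter group is either virtually abelian of the stated product form or large --- is also the content of \cite[Proposition 17.2.1]{Davis}, which is the route indicated in the statement.
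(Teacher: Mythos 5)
Your argument is correct, but it is not the route taken in the paper: there the lemma is given no self-contained proof at all and is simply declared to be a consequence of the general dichotomy for Coxeter groups in \cite[Proposition 17.2.1]{Davis} (a finitely generated Coxeter group is either large or virtually abelian, the virtually abelian case corresponding exactly to the stated product decomposition). Your proof is elementary and purely combinatorial instead: the reformulation of the join condition as the statement that the complement graph $\overline{\Gamma}$ has maximal degree at most one, the extraction of a three-vertex induced subgraph whose right-angled Coxeter group is $\mathbb{Z}_2 \ast \mathbb{Z}_2 \ast \mathbb{Z}_2$ or $(\mathbb{Z}_2 \times \mathbb{Z}_2) \ast \mathbb{Z}_2$, the use of retractions onto induced subgraphs together with the fact that largeness lifts along quotients, and the easy direction via amenability are all sound. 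What your approach buys is independence from Davis's structure theory at the price of a little graph combinatorics; what the citation buys is brevity and a statement valid for all Coxeter groups. One point you implicitly (and correctly) fixed is worth making explicit: as printed, the third bullet of the lemma cannot be equivalent to the first two, since a direct sum of copies of $\mathbb{Z}_2$ and $\mathbb{D}_\infty$ is virtually abelian and hence never large; the intended third assertion is its negation, and your cycle of implications establishes precisely that corrected equivalence (large $\Leftrightarrow$ not a join of the stated form $\Leftrightarrow$ no such direct-sum decomposition), which is also the form in which the lemma is actually invoked in the proof of Proposition \ref{prop:RACG}.
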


\begin{proof}[Proof of Proposition \ref{prop:RACG}.]
A SIL in $\Gamma$ is the data of two vertices $u,v \in V(\Gamma)$ satisfying $d(u,v) \geq 2$ such that $\Gamma \backslash (\mathrm{link}(u) \cap \mathrm{link}(v) )$ contains at least one connected component which does not contain $u$ nor $v$. Notice that, if $\Gamma$ decomposes as the join of a complete graph with copies of the disjoint union of two single vertices, then $\Gamma$ cannot contain a SIL.

\medskip \noindent
If $\Gamma$ contains a SIL, then $\mathrm{Out}(C_\Gamma)$ is large according to \cite{Sale}, ie., it contains a finite-index subgroup which surjects onto a non-abelian free subgroup. We deduce from the conditions (i), (ii) and (iii) that $\mathrm{Aut}(C_\Gamma)$ has to satisfy $\mathcal{P}$. 

\medskip \noindent
From now on, assume that $\Gamma$ does not contain a SIL. As a consequence of \cite[Theorem~1.4]{GPR}, $\mathrm{Out}(C_\Gamma)$ must be finite, so that $\mathrm{Inn}(C_\Gamma)$ must have finite index in $\mathrm{Aut}(C_\Gamma)$. Notice that, if $\Lambda$ denotes the subgraph of $\Gamma$ generated by the vertices which are adjacent to all the vertices of $\Gamma$, then the center of $C_\Gamma$ is generated by the generators labelling the vertices of $\Lambda$. Therefore, $C_\Gamma$ decomposes as $C_{\Gamma \backslash \Lambda} \oplus \mathbb{Z}_2^{\# V(\Lambda)}$ and $\mathrm{Inn}(C_\Gamma) \simeq C_\Gamma / Z(C_\Gamma) \simeq C_{\Gamma \backslash \Lambda}$. 

\medskip \noindent
We distinguish two cases. If $\Gamma \backslash \Lambda$ does not decompose as the join of disjoint unions of two single vertices, then it follows from Lemma \ref{lem:RACGSQ} and from the conditions (i), (ii) and (iii) that $\mathrm{Aut}(C_\Gamma)$ has to satisfy $\mathcal{P}$. Otherwise, if $\Gamma \backslash \Lambda$ decomposes as the join of disjoint unions of two single vertices, then $C_{\Gamma \backslash \Lambda}$ is a direct sum of infinite dihedral groups. As such a group is virtually abelian, we deduce from the conditions (iii) and (v) that $\mathrm{Aut}(C_\Gamma)$ cannot satisfy~$\mathcal{P}$. 
\end{proof}

\noindent
We are now ready to prove Theorem F.

\begin{proof}[Proof of Theorem F]
If $\Gamma$ contains two non-adjacent vertices $u$ and $v$ which are not both labelled by $\mathbb{Z}_2$, then we have seen in the previous section that $\mathrm{PConjAut}(G_u \ast G_v)$ is a quotient of $\mathrm{PConjAut}(\Gamma \mathcal{G})$ and that it acts on a simplicial tree with a WPD isometry. Because $G_u \ast G_v$ is not virtually cyclic, it follows from \cite{OsinAcyl} that $\mathrm{PConjAut}(G_u \ast G_v)$ must be acylindrically hyperbolic, so that $\mathrm{ConjAut}(\Gamma \mathcal{G})$ has to satisfy $\mathcal{P}$ as a consequence of the conditions (i), (ii) and (iii).

\medskip \noindent
Now, assume that any two non-adjacent vertices of $\Gamma$ are labelled by $\mathbb{Z}_2$. As a consequence, if we decompose $\Gamma$ as a join $\Lambda \ast \Xi$ where $\Lambda$ is the subgraph generated by the vertices with are adjacent to all the vertices of $\Gamma$, then the vertices of $\Xi$ have to be all labelled by $\mathbb{Z}_2$. Therefore, $\mathrm{PConjAut}(\Gamma \mathcal{G})$ is isomorphic to the direct sum 
$$\mathrm{PConjAut}(C_\Xi) \oplus \bigoplus\limits_{u \in V(\Xi)} \mathrm{Inn}(G_u) \simeq  \mathrm{PConjAut}(C_\Xi) \oplus \bigoplus\limits_{u \in V(\Xi)} G_u/Z(G_u).$$
As a consequence of the condition (ii) and (iv), we know that $\mathrm{ConjAut}(\Gamma \mathcal{G})$ satisfies $\mathcal{P}$ if and only if so does one the direct factors above. Therefore, it follows from Proposition \ref{prop:RACG} that $\mathrm{ConjAut}(\Gamma \mathcal{G})$ does not satisfy $\mathcal{P}$ only when $G_u/Z(G_u)$ does not satisfy $\mathcal{P}$ for every $u \in \Xi$ and when $\Xi$ decomposes as the join of a complete graph with copies of the disjoint union of two single vertices.
\end{proof}

\begin{remark}\label{remark}
It is worth noticing that the proof above shows that Theorem F still holds if the condition (i) satisfied by $\mathcal{P}$ is replaced with: if $A$ and $B$ are two groups which are not both cyclic of order two, then $\mathrm{ConjAut}(A \ast B)$ satisfies $\mathcal{P}$. 
\end{remark}

\noindent
Finally, Theorems B, C and D are direct consequences of the combination of Theorem E above with our next statement:

\begin{prop}\label{prop:Vast}
The following group properties satisfy the conditions (i), (ii), (iii), (iv) and (v) above:
\begin{itemize}
	\item being SQ-universal;
	\item virtually having many quasimorphisms;
	\item not being boundedly generated.
\end{itemize}
\end{prop}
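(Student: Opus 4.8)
The plan is to verify the five closure conditions (i)--(v) separately for each of the three properties, since although (ii) is formal in every case, the remaining conditions require property-specific arguments. I would organise the work around two facts about acylindrically hyperbolic groups, which supply condition (i): such a group is SQ-universal (Dahmani--Guirardel--Osin; see \cite{OsinAcyl} and the references therein) and its space $\mathrm{QH}$ of homogeneous quasimorphisms is infinite-dimensional (Bestvina--Fujiwara, Hull--Osin). Condition (ii) is immediate for all three properties: a homogeneous quasimorphism on a quotient $G/N$ pulls back injectively along $G \to G/N$; a countable group embedding into a quotient of $G/N$ embeds into a quotient of $G$; and images of boundedly generating families are boundedly generating, so a non-boundedly-generated quotient forces $G$ itself to be non-boundedly generated. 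Condition (v) is equally quick: a homomorphism $\mathbb{Z}^n \to \mathbb{R}$ is the only kind of homogeneous quasimorphism on an abelian group, so $\mathrm{QH}(\mathbb{Z}^n) = \mathbb{R}^n$ is finite-dimensional and stays so on every (again free abelian) finite-index subgroup; every quotient of $\mathbb{Z}^n$ is abelian and so contains no non-abelian free subgroup, hence is not SQ-universal; and $\mathbb{Z}^n = \langle e_1 \rangle \cdots \langle e_n \rangle$ is boundedly generated.

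The quasimorphism property is the most self-contained, and I would treat it first. For condition (iii) I would use that a homogeneous quasimorphism vanishing on a finite-index subgroup vanishes identically (since $\varphi(g) = \varphi(g^k)/k$ and some power of $g$ lies in the subgroup), so restriction $\mathrm{QH}(H) \to \mathrm{QH}(H')$ is injective whenever $H' \leq H$ has finite index; this makes the infinite-dimensionality of $\mathrm{QH}$ a commensurability invariant, and hence ``virtually having many quasimorphisms'' as well. For condition (iv) I would use the standard splitting $\mathrm{QH}(A \oplus B) = \mathrm{QH}(A) \oplus \mathrm{QH}(B)$ (valid because a homogeneous quasimorphism is additive on commuting elements) together with the fact that every finite-index subgroup of $A \oplus B$ contains one of the form $A' \oplus B'$ with $A' \leq A$ and $B' \leq B$ of finite index; combined with (iii) this reduces ``$A \oplus B$ virtually has many quasimorphisms'' to ``$A'$ or $B'$ does''. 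For bounded generation the engine is the elementary estimate that if $G = \langle g_1 \rangle \cdots \langle g_n \rangle$, then any homogeneous quasimorphism vanishing on all the $g_i$ is bounded, hence zero, so $\dim \mathrm{QH}(G) \leq n$. This yields (i) at once, since acylindrically hyperbolic groups have infinite-dimensional $\mathrm{QH}$ and are therefore not boundedly generated, while (iv) and the overgroup half of (iii) are elementary: a finite union of cosets of a boundedly generated subgroup is boundedly generated, and the product of two boundedly generated groups is boundedly generated.

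For SQ-universality the conditions (i), (ii), (v) are handled as above, but (iii) and (iv) are the substantial points, and I would cite them rather than reprove them. Condition (iii) is the statement that SQ-universality is a commensurability invariant, a known theorem whose delicate half is that a finite-index subgroup of an SQ-universal group is again SQ-universal. Condition (iv) is the statement that $A \oplus B$ is SQ-universal if and only if one factor is; the forward implication is formal from (ii), so the content is the converse, which I expect to be the main obstacle. A natural first move is the Goursat-type observation that any quotient $Q$ of $A \oplus B$ fits in an extension of a quotient of $B$ by a quotient of $A$, so a simple subgroup of $Q$ lands either in the image of $A$ or isomorphically in the image of $B$; this already shows every countable simple group embeds in a quotient of $A$ or of $B$. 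The difficulty is that SQ-universality quantifies over all countable groups, not merely the simple ones, so this reduction does not close the argument by itself, and one must appeal to the sharper direct-product theorem for SQ-universality recorded in \cite{Vastness}. Assembling the three verifications then yields the proposition.
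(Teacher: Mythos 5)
Your proposal is correct and follows essentially the same route as the paper: a condition-by-condition verification for each of the three properties, resting on the same external inputs (acylindrical hyperbolicity implies SQ-universality via \cite{OsinAcyl}, infinite-dimensionality of $\mathrm{QH}$ via Bestvina--Fujiwara \cite{BFquasi}, commensurability invariance of SQ-universality via \cite{Neumann}, and the direct-sum lemma for SQ-universality from \cite{Vastness}). You fill in more elementary details than the paper, which mostly cites; the one genuinely different step is condition (i) for bounded generation, where the paper defers to the discussion after \cite[Theorem 2.33]{DGO} while you derive it directly from the estimate $\dim \mathrm{QH}(G) \leq n$ for $G = \langle g_1 \rangle \cdots \langle g_n \rangle$ together with \cite{BFquasi} --- a clean, self-contained substitute. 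Two minor points: for condition (iii) of bounded generation you only prove the overgroup half (a finite-index overgroup of a boundedly generated group is boundedly generated); the converse half, that bounded generation passes down to finite-index subgroups, is also needed (it is used in the proof of Proposition~\ref{prop:RACG} when passing from a virtually abelian group to its free abelian finite-index subgroup) and, while standard, deserves at least a citation. And your Goursat-type reduction for the SQ-universality direct-sum statement is correctly flagged as insufficient on its own, so leaning on \cite[Lemma 1.12]{Vastness} there, as the paper does, is the right call.
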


\begin{proof}
It is clear that being SQ-universal satisfies the conditions (ii) and (v). The conditions (i), (iii) and (iv) follow from \cite[Theorem 8.1]{OsinAcyl}, \cite{Neumann} and \cite[Lemma 1.12]{Vastness} respectively. 

\medskip \noindent
It is clear that not being boundedly generated satisfies the conditions (ii), (iii), (iv) and (v). About condition (i), we refer to the discussion following \cite[Theorem 2.33]{DGO} and to the references mentioned therein.

\medskip \noindent
Virtually having many quasimorphisms satisfies the conditions (ii), (iii) and (iv), see \cite[Proposition 1.18]{Vastness} for more details. Condition (i) is proved by \cite{BFquasi}. About condition (v), it is sufficient to notice that $\mathbb{Z}$ has a one-dimensional space of quasimorphisms and next to apply condition (iv).
\end{proof}

\noindent
We conclude the article by proving Proposition E.

\begin{proof}[Proof of Proposition E.]
According to \cite[Theorem 3.15]{Genevois}, $\mathrm{ConjAut}(\Gamma \mathcal{G})$ has finite index in $\mathrm{Aut}(\Gamma \mathcal{G})$. Therefore, the equivalences between (i), (iii), (iv) and (v) are immediate consequences of Theorems B, C and D and Proposition \ref{prop:Vast}. 

\medskip \noindent
Next, it follows from \cite[Corollary 1.6]{Vastness} that involving all finite groups satisfies the conditions (ii), (iii) and (iv) above. Condition (v) is also clearly satisfied. Now let $A$ and $B$ be two finite groups which are not both cyclic of order two. Notice that $\mathrm{ConjAut}(A \ast B)$ contains $\mathrm{Inn}(A \ast B) \simeq A \ast B$ as a normal subgroup, which is virtually a finitely generated free group of rank $\geq 2$ according to \cite[Proposition 4, page 6]{Serre}. As a consequence of \cite[Lemma 1.5]{Vastness}, $\mathrm{ConjAut}(A \ast B)$ has to involve all finite groups. Therefore, we deduce from Remark \ref{remark} that the conclusion of Theorem F holds in the realm of graph products of finite groups, proving the equivalence between (i) and (ii) in our proposition.

\medskip \noindent
If we denote by $\langle \Lambda \rangle$ (resp. $\langle \Xi \rangle$) the subgroup of $\Gamma \mathcal{G}$ generated by the vertex-groups labelling the vertices of $\Lambda$ (resp. of $\Xi$), then $\Gamma \mathcal{G}$ decomposes as the direct sum $\langle \Xi \rangle \oplus \langle \Lambda \rangle$. Notice that $\langle \Lambda \rangle$ is finite, so that $\langle \Xi \rangle$ must have finite index in $\Gamma \mathcal{G}$. If $\Xi$ contains a vertex $u$ which is labelled by a group $G_u$ which is not cyclic of order two, then $\Gamma \mathcal{G}$ contains the free product $\langle G_u, G_v \rangle \simeq G_u \ast G_v$ as a subgroup, where $v$ is a vertex of $\Gamma$ which is not adjacent to $u$. But such a group is virtually a non-abelian free group according to \cite[Proposition 4, page 6]{Serre}, so $\Gamma \mathcal{G}$ cannot be virtually abelian. So let us suppose that the vertices of $\Xi$ are all labelled by $\mathbb{Z}_2$. As a consequence, the subgroup $\langle \Xi \rangle$ is isomorphic to the right-angled Coxeter group $C_\Xi$, so that the desired conclusion follows from Lemma \ref{lem:RACGSQ}. 
\end{proof}

\end{document}